\newtheorem{theorem}{Theorem}%[section]
\newtheorem{lemma}[theorem]{Lemma}
\newtheorem{proposition}[theorem]{Proposition}
\newtheorem{remark}[theorem]{Remark}
\newtheorem{conjecture}[theorem]{Conjecture}
\newcommand\pa{{Property $(\ast)$}}
\title{Crumby colorings --- red-blue vertex partition of subcubic graphs regarding a conjecture of Thomassen}
\author{János Barát \\
\small Alfr\'ed R\'enyi Institute of Mathematics\\
\small University of Pannonia, Department of Mathematics\\
\small 8200 Veszprém, Egyetem utca 10., Hungary\\
\small and\\
Zoltán L. Blázsik \\
\small Alfr\'ed R\'enyi Institute of Mathematics\\
\small MTA--ELTE Geometric and Algebraic Combinatorics Research Group \\
\small \url{blazsik@renyi.hu} %\zoltan{e-mail címeket beírjuk?}\janos{az újság kérte?}
\\
\small and \\
Gábor Damásdi \\
\small MTA-ELTE Lend\"ulet Combinatorial Geometry Research Group, \\ \small  ELTE E\"otv\"os Lor\'and University, Budapest, Hungary
}
\begin{document}
\thispagestyle{empty}
\maketitle

%\centerline{Dedicated to Anna and Jack}

\begin{abstract}
Thomassen formulated the following conjecture: Every $3$-connected cubic graph has a red-blue vertex coloring such that the blue subgraph has maximum degree at most $1$
(that is, it consists of a matching and some isolated vertices) and the red
subgraph has minimum degree at least $1$ and contains no $3$-edge path.
Since all monochromatic components are small in this coloring and there is a certain irregularity, we call such a coloring \emph{crumby}.
Recently, Bellitto, Klimo\v sová, Merker, Witkowski and Yuditsky \cite{counter} constructed an infinite family refuting the above conjecture.
Their prototype counterexample is $2$-connected, planar, but contains a $K_4$-minor and also a $5$-cycle.
This leaves the above conjecture open for some important graph classes: outerplanar graphs, $K_4$-minor-free graphs, bipartite graphs.
In this regard, we prove that $2$-connected outerplanar graphs, subdivisions of $K_4$ and $1$-subdivisions of cubic graphs admit crumby colorings.
A subdivision of $G$ is {\it genuine} if every edge is subdivided at least once.
We show that every genuine subdivision of any subcubic multigraph admits a crumby coloring.
We slightly generalise some of these results and formulate a few conjectures.
\end{abstract}

\section{Introduction}
The graphs in this paper are finite and without loops and multiple edges except Theorem~\ref{t:gen}.
Our notations and terminology mostly follow the Graph Theory book by Bondy and Murty~\cite{b&m}.
In particular, we call a graph {\it subcubic} if it has maximum degree at most 3 and $P_k$ denotes a path on $k$ vertices.

Thomassen \cite{ct17} gave an intricate inductive proof of the following result, a classical case of Wegner's conjecture \cite{weg}: the square of every planar cubic graph is $7$-colorable.
In the same paper, Thomassen formulated an attractive conjecture, which would imply the aforementioned result.

\begin{conjecture} \label{ccc}Every $3$-connected cubic graph has a red-blue vertex coloring such that
the blue subgraph has maximum degree at most $1$
(that is, it consists of a matching and some isolated vertices) and the red
subgraph has minimum degree at least $1$ and contains no $3$-edge path.
\end{conjecture}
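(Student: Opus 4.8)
The plan is to attack Conjecture~\ref{ccc} by induction on $|V(G)|$, after first making the target structure explicit. Write $R$ and $B$ for the red and blue color classes. The blue condition says $G[B]$ has maximum degree at most $1$, and the red condition---minimum degree at least $1$ together with no $P_4$---forces every component of $G[R]$ to be a star $K_{1,d}$ with $1\le d\le 3$ or a triangle $K_3$, since in a subcubic graph these are the only connected graphs having no isolated vertex and no path on four vertices (any longer path, any cycle of length $\ge 4$, and any triangle with a pendant edge already contains $P_4$). Thus a crumby coloring is precisely a partition of $V(G)$ into a blue part inducing a matching-plus-isolated-vertices and a red part inducing a disjoint union of stars and triangles, and the local degree budget becomes transparent: each blue vertex retains at least two red neighbours, each red leaf exactly two, each red star-center of red-degree $d$ exactly $3-d$, and each triangle vertex exactly one.

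For the induction I would invoke the classical generation theorem for $3$-connected cubic graphs: every such graph is obtained from $K_4$ by repeatedly subdividing two edges with new vertices $x,y$ and adding the edge $xy$. After checking $K_4$ (and a short list of small base graphs) by hand, the inductive step takes $G$ built from a smaller $3$-connected cubic graph $G'$ by subdividing $a_1b_1$ and $a_2b_2$ into paths $a_1xb_1$ and $a_2yb_2$ and inserting $xy$. Given a crumby coloring of $G'$ from the inductive hypothesis, the task is to color $x,y$ and, if needed, locally repair the colors of $a_1,b_1,a_2,b_2$ so that the two new paths and the edge $xy$ satisfy all three conditions. I would organise this as a finite case analysis on the colors and red-degrees of the four attachment vertices, in each case attempting to realise $xy$ as a red $K_2$, or to attach $x$ (resp.\ $y$) as a matched blue vertex, absorbing any violation by a bounded recoloring confined to a neighbourhood of the insertion. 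A parallel angle worth trying is to start from a perfect matching $M$ (which exists since $G$ is bridgeless) and build the coloring along the cycles of the $2$-factor $G-M$.

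The hard part---and the genuine crux of the whole statement---will be proving that a bounded local repair is \emph{always} available, because the three conditions couple globally through the no-$P_4$ clause. Promoting an attachment vertex to red to feed a newly red insertion vertex can extend a red path into a forbidden $P_4$ far away, while demoting it to blue risks creating a blue vertex of degree $2$; short cycles are exactly the configurations where every local choice cascades into a forbidden pattern, and this is where I expect the induction to resist any clean completion. This obstacle is not incidental: the configurations that defeat the local extension are realised by the infinite family of Bellitto, Klimo\v sov\'a, Merker, Witkowski and Yuditsky~\cite{counter}, whose members---organised around a $5$-cycle and a $K_4$-substructure---admit no crumby coloring at all. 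Any attempt at the full conjecture must therefore concentrate its entire force on the extension step, and it is precisely the failure of this step on such short-cycle/$K_4$ configurations that one has to confront head-on.
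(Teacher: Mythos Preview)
The paper contains no proof of this statement, and for good reason: Conjecture~\ref{ccc} is \emph{false}. As the paper itself emphasises in the introduction, Bellitto, Klimo\v sov\'a, Merker, Witkowski and Yuditsky~\cite{counter} constructed an infinite family of $3$-connected cubic graphs with no crumby coloring. The paper does not attempt to prove the conjecture; rather, it investigates which restricted graph classes (outerplanar, genuine subdivisions, subdivisions of $K_4$) still admit crumby colorings despite the conjecture's failure in general.

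Your proposal is therefore not a proof but a sketch of an inductive strategy that you yourself recognise must break down. The concrete gap is exactly the one you name at the end: the ``bounded local repair'' in the inductive step is \emph{not} always available, and the counterexamples of~\cite{counter} witness this. Once genuine counterexamples exist, no amount of case analysis on the attachment vertices $a_1,b_1,a_2,b_2$ can close the induction---some branch of the case split will be unresolvable. Framing this as ``the hard part'' or ``the crux'' understates the situation: it is not a difficulty to be overcome but an outright obstruction, and your final paragraph should conclude that the conjecture is disproved rather than that future attempts must ``concentrate their force'' on the extension step.

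A minor remark on your structural analysis: the classification of red components as stars $K_{1,d}$ ($1\le d\le 3$) and triangles is correct, and the degree-budget bookkeeping is accurate. That part would be a sound opening for a proof \emph{if} the statement were true; the failure lies entirely downstream, in the nonexistence of the local extension you rely on.
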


Since every monochromatic subgraph is small, but the conditions in the two colors are asymmetric, making this coloring somewhat irregular, we call this a {\em crumby coloring}.
{}From a classical graph decomposition point of view, we are seeking graph classes such that every member admits a crumby coloring.
As it was observed shortly after the appearance of the conjecture, the $3$-prism does not have a crumby coloring. For some time, it looked like this is the only counterexample.
Supporting this, Barát \cite{Barat} showed that every subcubic tree has a crumby coloring.
However, Bellitto et al. \cite{counter} found a construction that produces an infinite family of $3$-connected cubic counterexamples and also $2$-connected planar graphs without crumby colorings.
This fact gives evidence that the intricate induction by Thomassen is somewhat unavoidable.
On the other hand, it leaves open the possibility that crumby colorings might exist for some important graph classes. For instance, outerplanar graphs or bipartite graphs.
Indeed, in Section \ref{s:outer} we show that any $2$-connected subcubic outerplanar graph admits a crumby coloring even if the color of an arbitrary vertex is prescribed.

The fact that we can prescribe the color of a vertex is useful in the  following sense. We believe that crumby colorings exist for every subcubic outerplanar graph.
However, there are various difficulties to extend the results on $2$-connected graphs to all outerplanar graphs.
In a general outerplanar graph there might be trees attached to $2$-connected blocks or between them.
Since Conjecture~\ref{ccc} holds for trees, it gives some hope to combine these two results as building bricks, where having the extra freedom of prescribing the color of a vertex comes into the picture.

The following theorem is a straightforward strengthening of a result of Barát \cite{Barat}. It is routine to check that the original proof literally holds for this version.

\begin{theorem}[Barát \cite{Barat}] \label{t:treesplus}
Every subcubic tree admits a crumby coloring such that the color of a leaf is prescribed.
\end{theorem}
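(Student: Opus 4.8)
The plan is to prove the statement by induction on $|V(T)|$, strengthening the inductive hypothesis enough to control the colouring across the single edge that joins a peeled‑off pendant vertex to the rest of the tree. Trees on at most three vertices, the claw $K_{1,3}$, and a handful of other small trees are coloured by inspection; these degenerate instances are exactly what pin down the precise form of the strengthened statement.

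For the inductive step, given a larger tree $T$ with prescribed leaf $\ell$, I would set $u := N(\ell)$ and $T' := T-\ell$, which is again subcubic with $\deg_{T'}(u)\le 2$. One colours $T'$ first, using the hypothesis to place $u$ into a local state compatible with re‑attaching $\ell$ in its prescribed colour, and then adds $\ell$ back. Accordingly, the statement I would carry along concerns a subcubic tree $T$ with a chosen vertex $x$ of degree at most $2$, and asserts the existence of a colouring that is crumby \emph{except that $x$ is permitted to be an isolated red vertex}, in which $x$ realises a prescribed state from a short list: (a) blue with no blue neighbour; (b) blue (hence with at most one blue neighbour); (c) red, with every red neighbour of $x$ an endpoint of its red component --- equivalently, the red component of $x$ is a star centred at $x$, the trivial star $\{x\}$ being exactly the tolerated defect. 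Which of (a)--(c) are available is dictated by the small base trees (for example, for the centre of a $P_3$ only (c) is available). The tolerated defect in (c) is genuinely necessary: already for the tree obtained from $K_{1,3}$ by subdividing one edge twice, with $\ell$ the endpoint of the long leg, one checks that every crumby colouring with $\ell$ red forces $u$ to be an isolated red vertex in $T-\ell$, although $T$ itself does admit a crumby colouring with $\ell$ red. The theorem follows: for a blue $\ell$ apply the strengthened statement with $x=\ell$ in case (b); for a red $\ell$ apply it to $T-\ell$ with $x=u$ in case (c), after which re‑attaching $\ell$ to $u$ removes the permitted defect.

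The inductive step for the strengthened statement branches on $\deg_T(x)$. If $\deg_T(x)=2$, then $T$ is the union along $x$ of the two subtrees hanging at $x$, in each of which $x$ is a leaf; one recurses on the two sides with the state of $x$ distributed appropriately between them --- the list (a)--(c) is closed under this splitting, the only point being that at most one side may contribute a blue neighbour of $x$ --- and glues the two colourings along $x$. If $x$ is a leaf, one deletes $x$, applies the hypothesis to $T-x$ with chosen vertex $N_T(x)$ (of degree $\le 2$ there) and a suitably chosen target state, and recolours $x$. In the re‑attachment every newly created monochromatic component is an edge, a red $P_3$, or an isolated vertex, so each verification --- blue degrees remain $\le 1$, no red $P_4$ appears, and no forbidden isolated red vertex is created --- is immediate.

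I expect the main obstacle to be precisely the design of the strengthened hypothesis: finding a list of local states at $x$ that is short, is realisable on \emph{every} subcubic tree (here the small base trees force one's hand, and the ``$x$ may be isolated red'' slack is what makes (c) always attainable), and is ``closed'' enough that both operations above --- splitting at a degree‑$2$ vertex and re‑attaching a pendant vertex in either colour --- always land back in the list. Once the right invariant is isolated, the remaining case analysis is mechanical, because all the monochromatic components involved are small.
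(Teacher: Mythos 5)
Your overall strategy (induction on $|V(T)|$ with a strengthened hypothesis recording a local ``state'' of a distinguished vertex of degree at most $2$, with an explicitly tolerated defect at that vertex) is the right kind of strategy, and it is in the spirit of the inductive proof the paper points to: the paper does not reprove Theorem~\ref{t:treesplus} but notes that Bar\'at's original induction for subcubic trees carries over verbatim when a leaf's color is prescribed. However, as written your proposal has a genuine gap: the strengthened statement is never actually fixed. Saying that ``which of (a)--(c) are available is dictated by the small base trees'' is not an inductive hypothesis one can verify; the entire content of the theorem is precisely the specification of such an invariant (which states are guaranteed for which $(T,x)$, with a complete list of exceptional trees) together with the closure check, and you explicitly defer this (``I expect the main obstacle to be precisely the design of the strengthened hypothesis''). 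In particular, to deduce the theorem you need that (b) is available at \emph{every} leaf and (c) at \emph{every} vertex of degree at most $2$; neither is established, and the first is essentially the blue half of the theorem itself.

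Moreover, the concrete list (a)--(c) is not closed under your two operations as claimed. First, (a) is unrealizable already on $K_2$ (a blue leaf with no blue neighbour forces an isolated red vertex), so the splitting step at a blue degree-$2$ vertex, which needs (a) on one side, requires an exception catalogue you have not worked out (this is exactly where the $P_3$-centre exception comes from). Second, and more seriously, the tolerated defect in (c) travels with the distinguished vertex: to realize (a) or (b) at a leaf $x$ of $T$ you would colour $T-x$ with distinguished vertex $u=N(x)$ in state (c), but if that colouring uses the defect ($u$ an isolated red vertex of $T-x$), then after attaching the blue $x$ the vertex $u$ is a genuinely isolated red vertex of $T$, no longer excused. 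So you would need a defect-free variant of (c), whose availability can fail --- your own example (the claw with one edge subdivided twice, distinguished vertex the long-leg leaf of $T-\ell$) shows exactly this --- which forces further states and exceptions. Hence the case analysis is not ``mechanical once the invariant is isolated''; isolating a list that is simultaneously realisable and closed is the proof, and it is missing here.
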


We strengthen this result further in Section \ref{s:outer}.
This allows us to significantly decrease the number of problematic attached trees.

As a weakening of Conjecture~\ref{ccc}, we conjecture that every $K_4$-minor-free graph admits a crumby coloring. This class is interesting for several reasons. Since outerplanar graphs are $K_4$- and $K_{2,3}$-minor-free, this would be a natural extension step from outerplanar graphs. It also concurs with the fact that all known counterexamples to Conjecture~\ref{ccc} contain $K_4$-minors. In contrast, we show a crumby coloring of any subdivision of $K_4$ in Section~\ref{sec:k4}.

%Pro Conjecture~\ref{ccc}, we show a crumby coloring of any subdivision of $K_4$ in Section~\ref{sec:k4}. This class is interesting for several reasons. In particular, we conjecture that every $K_4$-minor-free graph admits a crumby coloring. Since outerplanar graphs are $K_4$- and $K_{2,3}$-minor-free, this would be a natural extension step from outerplanar graphs. It also resonates with the fact that all known counterexamples to Conjecture~\ref{ccc} contain $K_4$-minors. In contrast, we show a crumby coloring of any subdivision of $K_4$ in Section~\ref{sec:k4}.

However, we first prove that members of a special class of bipartite graphs admit crumby colorings in Section~\ref{sec:bip}.
They are the $1$-subdivisions of cubic graphs, that arise from cubic graphs by adding an extra vertex on each edge.
In this way, we form bipartite graphs, where the vertices in one class have degree 2 and in the other class degree 3. A crucial idea in the proof is to use the maximum matching of the original graph. To this end, we employ the famous Edmonds-Gallai decomposition theorem.

Motivated by the previous results, we introduced the notion of \emph{genuine subdivision} of a graph $G$, that is a graph $H$, which we get from $G$ by subdividing every edge of $G$ by at least one vertex. As a generalization, in the latter part of Section~\ref{sec:bip}, we prove that every genuine subdivision of any subcubic multigraph admits a crumby coloring. We state this result for multigraphs because at one point in the proof, we eliminate the degree 2 vertices and this step can create parallel edges.

\section{Bipartite graphs and subdivisions\label{sec:bip}}

Despite the infinite family of counterexamples in \cite{counter}, we still believe that Conjecture~\ref{ccc} holds for most subcubic graphs.
We pose the following conjecture.

\begin{conjecture} \label{bip}
Every subcubic bipartite graph admits a crumby coloring.
\end{conjecture}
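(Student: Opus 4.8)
The plan is to push the maximum-matching strategy behind the genuine-subdivision result (which already covers, e.g., the $1$-subdivisions of cubic graphs) to all subcubic bipartite graphs. First I would reduce to the essential case: components are coloured separately, isolated vertices are blue, and pendant trees are absorbed using Theorem~\ref{t:treesplus}, prescribing the colour of a tree's attachment leaf to agree with the colour its anchor vertex will receive; peeling these off, we may assume $G$ is connected with minimum degree at least $2$. (Note that a subcubic bipartite graph whose one colour class has all degrees $2$ is a genuine subdivision of a subcubic multigraph, so morally the new difficulty lives in high-degree vertices on both sides.) Fix a maximum matching $M$ of $G$ with colour classes $A$ and $B$, and let $U=V(G)\setminus V(M)$ be the $M$-unmatched vertices. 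Since $M$ is maximal, $U$ is independent, so all of $U$ may safely be coloured blue; writing the final blue set as $U\cup W$, the task is to choose $W\subseteq V(M)$.

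Colour $V(M)$ red tentatively. The red subgraph is then $G-U$, which has no isolated vertex, because every $v\in V(M)$ sees its partner $M(v)\in V(M)$; hence the only obstruction is a $P_4$ inside $G-U$. Because $G$ is subcubic such a $P_4$ is short and rigid: in the simplest and typical case it is a \emph{bad pair} --- two matching edges $u_1v_1,\,u_2v_2\in M$ joined by a non-matching edge $v_1v_2$, forming $u_1v_1v_2u_2$ --- and a routine bounded-degree case analysis (leaning on the Gallai--Edmonds/K\"onig structure of bipartite matchings) enumerates the remaining shapes. The whole problem thus becomes: \emph{find $W\subseteq V(M)$ such that $G-U-W$ is a star forest with no isolated vertex while $G[U\cup W]$ has maximum degree at most $1$}. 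Using that $U$ is independent, the last condition just says every vertex of $W$ has at most one neighbour in $U\cup W$ and every vertex of $U$ has at most one neighbour in $W$; then $R=V(M)\setminus W$ and $B=U\cup W$ is a crumby colouring.

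The bulk of the proof is showing such a $W$ exists. Locally the recipe is clear: put into $W$ one vertex from each obstruction --- say the vertex $v_1$ of a bad pair --- and, if that red-isolates its partner $u_1$, absorb $u_1$ into a red star through one of its surviving red neighbours. The difficulty is global consistency: two recolourings may land on adjacent vertices (or on a vertex of $U$ and its neighbour), violating the blue degree bound, and each recolouring can trigger a cascade of further recolourings to avoid red-isolated vertices. I would try to control this either by an independent-transversal argument on an auxiliary conflict hypergraph, whose edges have bounded size since $G$ is subcubic, or by choosing $M$ among all maximum matchings to minimise the number of obstructions and then arguing that the extremal $M$ admits a conflict-free $W$.

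I expect the genuine obstacle to be the case $U=\emptyset$, i.e.\ when $M$ is perfect --- which includes every cubic bipartite graph, since regular bipartite graphs have perfect matchings, but already arises for $C_6$. Here there is no reservoir of pre-blue vertices, every recolouring immediately threatens to red-isolate an $M$-partner, and the cascade is hardest to terminate. For that case a different starting point may be needed: for instance, building the red star forest directly from a proper $3$-edge-colouring of $G$ (which exists by K\"onig's edge-colouring theorem), using two colour classes to lay down the stars and the third to locate the blue matching, rather than growing everything from a single matching.
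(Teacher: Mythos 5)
You are attempting to prove Conjecture~\ref{bip}, which the paper does not prove: it is posed as open, and the paper only establishes the special cases of $1$-subdivisions of cubic graphs (Theorem~\ref{t:1-sub}) and genuine subdivisions of subcubic graphs (Theorem~\ref{t:gen}), both via maximum matchings and the Edmonds--Gallai decomposition, which is also your starting point. So there is no proof in the paper to compare against, and, more importantly, your proposal is not a proof. The central step --- the existence of the blue set $W\subseteq V(M)$ turning $G-U-W$ into a star forest without isolated vertices while keeping $G[U\cup W]$ of maximum degree $1$ --- is essentially a restatement of the conjecture, and you leave it unresolved: the ``independent transversal on a bounded conflict hypergraph'' and the ``extremal choice of maximum matching'' are only named, not executed, and the recolouring cascades you describe are exactly where such arguments break down. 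Nothing in the outline uses bipartiteness beyond the absence of triangles, yet the counterexamples of Bellitto et al.\ show that the analogous local recolouring scheme must fail for general subcubic graphs, so any correct proof has to exploit the bipartite structure in an essential way.

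Two concrete pieces are missing or would fail. First, the reduction: absorbing pendant trees by prescribing the colour of the attachment leaf requires a prescribed-colour crumby colouring of the remaining (minimum degree $2$, bipartite) part, which is not available; moreover, the paper's Remark~\ref{csakketfa} shows that attachments isomorphic to $K_2$ or $K_{1,3}$ are precisely the ones that cannot be peeled off this way, and your sketch does not handle them. Second, the case you yourself flag as the genuine obstacle, $U=\emptyset$ (which contains all cubic bipartite graphs and is presumably the heart of the conjecture), is left with only a vague pointer to K\"onig's edge-colouring theorem; that idea does not work as stated, since the union of two colour classes of a proper $3$-edge-colouring of a cubic bipartite graph is a disjoint union of even cycles, not a star forest, so one is back to breaking cycles by recolourings and to the same global consistency problem. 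As it stands, the proposal is a reasonable research plan in the spirit of Section~\ref{sec:bip}, but it does not prove the statement.
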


We can prove this for a special class of bipartite graphs, where the degrees are all 2 in one class and 3 in the other class.
In the proof, we apply the Edmonds-Gallai decomposition theorem \cite{edm,gal} that gives us information about the structure of the maximum matchings of a graph $G$.
We recall that $P_k$ denotes a path with $k$ vertices and $N(X)$ denotes the set of neighbors of a vertex set $X$.
A graph $G$ is {\it hypomatchable} or {\it factor-critical} if for every vertex $x$, the graph $G-x$ has a perfect matching.

\begin{theorem}[Edmonds-Gallai decomposition]\label{e-g}
Let $G$ be a graph and let $A\subseteq V(G)$ be the collection of all vertices $v$ such that there exists a maximum size matching which does not cover $v$.
Set $B=N(A)$ and $C=V(G)\setminus (A \cup B)$. Now
\begin{enumerate}
    \item[$(i)$] Every odd component $O$ of $G-B$ is hypomatchable and $V(O)\subseteq A$.
    \item[$(ii)$] Every even component $Q$ of $G-B$ has a perfect matching and $V(Q)\subseteq C$.
    \item[$(iii)$] For every $X\subseteq B$, the set $N(X)$ contains vertices in more than $|X|$ odd components of $G-B$.
\end{enumerate}
\end{theorem}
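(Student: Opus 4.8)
The plan is to organise the proof around the set $A$ of \emph{exposable} vertices, namely those missed by some maximum matching, which is exactly how $A$ is defined, and to rely on a single classical tool, Berge's theorem: a matching $M$ is maximum if and only if $G$ has no $M$-augmenting path. Two elementary facts will be used repeatedly. First, a factor-critical graph has odd order, since deleting any vertex leaves a graph with a perfect matching; thus, once a component of $G[A]$ is shown to be factor-critical it is automatically one of the odd components. Second, because $B = N(A)$, the set $A$ has no neighbour in $C$, so every component of $G - B$ lies entirely in $A$ or entirely in $C$; the odd/even dichotomy of the components will then line up with the $A$/$C$ dichotomy once the components inside $A$ are identified as the factor-critical ones.

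The engine is Gallai's Lemma: a connected graph $H$ in which every vertex is exposable is factor-critical, equivalently every maximum matching of $H$ misses exactly one vertex. I would prove it by a purely two-matching extremal argument, which conveniently avoids blossoms. Suppose some maximum matching misses two vertices, and among all maximum matchings $M$ and all pairs $u \neq w$ exposed by $M$ choose one minimising the distance $d_H(u,w)$. These $u,w$ are non-adjacent (an edge joining two exposed vertices would be an augmenting path), so a shortest $u$--$w$ path has an interior vertex $z$; since $z$ is exposable, take a maximum matching $N$ missing $z$. As $z$ is covered by $M$ (otherwise $\{u,z\}$ would be a closer exposed pair), the component of the symmetric difference $M \triangle N$ containing $z$ is an even $M$-alternating path from $z$ to a vertex $y$ that $M$ exposes; flipping $M$ along it yields a maximum matching exposing $z$ together with whichever of $u,w$ differs from $y$, a strictly closer exposed pair, contradicting minimality.

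Granting Gallai's Lemma, the backbone for parts (i) and (ii) is an analysis of $M$-alternating walks emanating from the exposed set of a fixed maximum matching $M$. I would show that the vertices reachable by \emph{even}-length such walks are exactly the exposable vertices $A$: one inclusion flips $M$ along an even alternating walk to expose a prescribed vertex, while the reverse compares $M$ with a maximum matching missing a given exposable $v$ and reads off an even alternating walk from the exposed set to $v$ in their symmetric difference. Dually, the vertices unreached by $M$-alternating walks are exactly $C$ and are matched by $M$ within their own components; those components have even order, so $M$ restricts to a perfect matching there, yielding (ii) together with $V(Q)\subseteq C$. For (i) I would establish that each component $K$ of $G[A]$ satisfies $A(G[K]) = V(K)$ via alternating walks confined to $K$, and then invoke Gallai's Lemma to conclude $K$ is factor-critical, hence odd, so the odd components of $G-B$ are precisely the components of $G[A]$ and $V(O)\subseteq A$. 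The delicate point here is that odd alternating closed walks (blossoms) obstruct a naive parity/partition argument, so certifying that the exposable vertices occupy exactly the odd components requires care.

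The main obstacle is the surplus condition (iii), which is precisely what sharpens the decomposition beyond Tutte's theorem. I would first prove the pivotal matching statement that every maximum matching saturates $B$ and matches distinct vertices of $B$ into distinct odd components of $G-B$ (these are the inner vertices of the alternating analysis, each matched to an even-reachable vertex in a separate factor-critical component); applying Hall's theorem to the incidence between $B$ and the odd components then gives the non-strict bound $|N(X)\cap\{\text{odd components}\}| \ge |X|$ for every $X\subseteq B$. For strictness I would argue by contradiction: if some nonempty $X$ met exactly $|X|$ odd components $O_1,\dots,O_k$, then every maximum matching would match $X$ bijectively into them, and since each $O_i$ is factor-critical it would perfectly cover the region $R = X\cup O_1\cup\dots\cup O_k$; but $R$ contains vertices of $A$, which by definition are missed by some maximum matching, a contradiction. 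The real work, as I see it, lies in making the alternating-walk analysis and the pivotal matching statement fully rigorous in the presence of blossoms — simultaneously tracking, across several odd components at once, exactly which vertex each maximum matching exposes and verifying that no maximum matching can deviate from the forced pattern, which is what makes the whole decomposition canonical rather than merely a Tutte-type barrier.
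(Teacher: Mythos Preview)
The paper does not prove this theorem at all: it is quoted as a classical background result with references to Edmonds and Gallai, and is then applied as a black box in the proofs of Theorems~\ref{t:1-sub} and~\ref{t:gen}. So there is no ``paper's own proof'' to compare against.

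That said, your outline is the standard textbook route (Gallai's Lemma via a minimal-distance pair of exposed vertices, then an alternating-walk analysis from a fixed maximum matching to identify $A$, $B$, $C$, and finally Hall plus the pivotal matching statement for the strict surplus in (iii)), and the sketch is correct. Your proof of Gallai's Lemma is clean and complete as written. You correctly flag the two places where real work remains: showing that exposability in $G$ transfers to exposability inside each component $K$ of $G[A]$ (so that Gallai's Lemma applies to $K$), and the pivotal statement that every maximum matching saturates $B$, sends distinct $B$-vertices to distinct odd components, and restricts to a near-perfect matching on each odd component. The strictness argument for (iii) is fine once that pivotal statement is in hand; just note that you also implicitly use that a maximum matching perfectly covers $O_i\setminus\{v\}$ when $v\in O_i$ is matched to $B$, which follows from maximality together with factor-criticality of $O_i$.
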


%\zoltan{Ha jól látom, akkor tulajdonképpen nem is használjuk ki az Edmonds-Gallai struktúratételt úgy igazán. Szerintem minden bizonyításban mondhatnánk azt, hogy vegyünk egy maximális párosítást és ha vannak, akkor a megfelelő ponton a fedetlen pontokat. Ennek ellenére hagyjuk benne így, vagy írjuk át?}
%\janos{Nekem úgy tűnik, hogy használjuk, persze lehet hogy elkerülhető lenne.}

In what follows, we study subdivisions of cubic graphs.
If we add precisely one new vertex on each edge, then the resulting graph is a {\it $1$-subdivision}.
We support Conjecture~\ref{bip} by showing the following

\begin{theorem} \label{t:1-sub}
Let $S(G)$ be the $1$-subdivision of a cubic graph $G$.
The bipartite graph $S(G)$ admits a crumby coloring.
\end{theorem}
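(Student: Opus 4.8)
I would start by encoding crumby colorings of $S(G)$ combinatorially. Call the vertices of $G$ \emph{old} (they have degree $3$ in $S(G)$) and the subdivision vertices \emph{new} (degree $2$), and write $w_e$ for the new vertex lying on an edge $e=xy$. A short case analysis shows that $S(G)$ has a crumby coloring if and only if $G$ has an independent set $Y$ and a matching $M^{\ast}$ of $G-Y$ such that, with $Z:=V(G)\setminus Y\setminus V(M^{\ast})$, every vertex of $Z$ has a neighbour in $Y$ and every vertex of $Y$ has at least two neighbours in $Z$. From such a pair one obtains a crumby coloring by colouring the old vertices of $Y$ blue and all other old vertices red, and colouring a new vertex red exactly when it lies on an edge of $M^{\ast}$ or on a $Y$--$Z$ edge, and blue otherwise: then the red components are the paths $x$--$w_{xy}$--$y$ with $xy\in M^{\ast}$ together with the stars centred at the vertices of $Z$ whose leaves run into $Y$, so the red subgraph has no $P_4$ and no isolated vertex; a blue new vertex has at most one blue neighbour since $Y$ is independent; and every $y\in Y$ has at most one blue new neighbour because it has degree $3$ and at least two neighbours in $Z$. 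For the converse one uses that a red old vertex having two red new neighbours is forced to have both of them run into blue old vertices (otherwise a red $P_4$ appears), and that a blue old vertex can afford only one blue new neighbour. Hence the theorem reduces to finding such a pair $(Y,M^{\ast})$ in every cubic graph $G$.

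If $G$ has a perfect matching $M$ --- in particular whenever $G$ is bridgeless, by Petersen's theorem --- I would take $Y=\emptyset$ and $M^{\ast}=M$; then $Z=\emptyset$ and the two conditions hold vacuously. Otherwise I would feed a maximum matching of $G$ into the Edmonds--Gallai decomposition (Theorem~\ref{e-g}): assuming $G$ connected, put the exposed vertices into $A$, set $B=N(A)$ and $C=V(G)\setminus(A\cup B)$; then the odd components $D_1,\dots,D_k$ of $G-B$ are factor-critical, the even ones carry perfect matchings, and since $|V(G)|$ is even we get $B\neq\emptyset$, $k\ge|B|+2$, and every $D_i$ sends an edge to $B$. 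The plan from here is to carve one vertex $s_i$ out of each $D_i$ (leaving $D_i-s_i$ perfectly matchable, by factor-criticality), put a neighbour of $s_i$ into $Y$ --- chosen in $B$ when possible, and inside $D_i$ otherwise, so that $Y$ stays independent --- move $s_i$ into $Z$, and extend $Z$ along one or two further edges at each vertex of $Y$ to reach the ``two neighbours in $Z$'' quota while keeping each piece perfectly matchable; the remaining graph $G-Y-Z$ then splits into even components of $G-B$, factor-critical pieces with an odd number of vertices removed, and leftover vertices of $B$, and should admit a perfect matching $M^{\ast}$. The Edmonds--Gallai axiom~$(iii)$ --- that $N(X)$ meets more than $|X|$ of the $D_i$ for every $X\subseteq B$ --- is exactly the Hall-type condition one needs, both to pair the surplus components off with distinct vertices of $B$ and to verify Tutte's condition for $G-Y-Z$.

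The hard part is this global coordination. One has to make all of the following fit together at once: $Y$ must be independent although $B$ need not be (so some vertices of $Y$ are forced to come from inside the $D_i$); each $y\in Y$ must collect two neighbours in $Z$ without deleting from some $D_i$ both a vertex and one of its $D_i$-neighbours when that would leave an odd, non-perfectly-matchable remnant (a triangle inside a $D_i$ is the warning case); and after every deletion the leftover vertices of $B$, which have degree $3$ and can touch many of the $D_i$, must still be matchable together with the surviving pieces. I expect essentially all of the real work to lie in organizing these deletions so that Tutte's condition for $G-Y-Z$ follows cleanly from axiom~$(iii)$, while the base case and the translation back to a coloring are immediate.
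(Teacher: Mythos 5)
Your reduction is sound as far as it goes: the ``if'' direction of your encoding is verified correctly (with $Y$ independent, $M^{\ast}$ a matching of $G-Y$, every $Z$-vertex seeing $Y$ and every $Y$-vertex seeing two $Z$-vertices, the described coloring of $S(G)$ is indeed crumby), and the perfect-matching case is immediate. But the theorem is about \emph{all} cubic graphs, and precisely the case without a perfect matching is the content of the statement; there your text stops at a plan and openly concedes that ``the hard part is this global coordination'' without carrying it out. That is a genuine gap, not a routine verification: as you set it up --- one carved vertex $s_i$ per odd component, a $Y$-vertex chosen in $B$ when possible, and a final appeal to Tutte's condition for $G-Y-Z$ --- you have to fight simultaneously for independence of $Y$ (since $B$ need not be independent), for the two-$Z$-neighbour quota, and for matchability of the remnant, and none of these is established.

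The missing idea is that this global coordination can be avoided entirely by building the matching explicitly from the Edmonds--Gallai data, as the paper does. Property~$(iii)$ of Theorem~\ref{e-g} gives, via Hall, a matching of $B$ into \emph{distinct} odd components; for each component so saturated, factor-criticality matches everything except the vertex attached to $B$, and together with perfect matchings of the even components this already puts all of $B$, $C$ and the saturated odd components into red $P_3$'s (equivalently, into your $M^{\ast}$). Only the odd components $H$ left unsaturated need a $Y$-vertex, and it can be chosen \emph{inside} $H$: carve out a vertex $x$, perfectly match $H-x$, and declare a neighbour $y$ of $x$ blue, which releases its matching partner $z$. Then $Y$ consists of one vertex per unsaturated component of $G-B$, so independence is automatic; $x,z\in Z$ give $y$ its two $Z$-neighbours; and no Tutte-type verification is needed because $M^{\ast}$ has been constructed component by component. (In the resulting coloring the components $x$--$v_{xy}$ and $z$--$v_{yz}$ are red $P_2$'s and $y$ forms a blue $P_2$ with its third subdivision neighbour, which your stricter star/$P_3$ picture did not anticipate but which is perfectly legal for a crumby coloring.) Without this step, or some substitute for it, your argument does not yet prove Theorem~\ref{t:1-sub}.
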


\begin{proof}
The idea of the proof is to color the original vertices (in $G$) red and color the subdivision vertices blue.
If $G$ admits a perfect matching $M$, then we recolor the subdivision vertices on $M$ to red.
This results in a crumby coloring consisting of red $P_3$-s and blue singletons.
We refer to this idea later as the {\em standard process}.
For instance, every 2-edge-connected graph $G$ admits a perfect matching by Petersen's Theorem.
If the graph $S(G)$ is the 1-subdivision of such $G$, then the standard process gives a crumby coloring of $S(G)$.

In what follows, we modify this simple idea to the general case, where $G$ is any cubic graph.
If $G$ does not possess a perfect matching, we can still consider a maximum size matching in $G$ and use the Edmonds-Gallai decomposition.

Let $G$ be a cubic graph, and let $B$ be given by the Edmonds-Gallai decomposition.
%We notice the following properties of $B$.
%There is no cycle $C$ in $B$, since $|N(C)|$ in $G-B$ cannot be greater than $|C|$, contradicting (iii).
%Similarly, there is no vertex $v$ in $B$ with $deg_{B}(v)=2$ or 3.
%Any such vertex $x$ contradicts (iii) again.
%Therefore, $B$ induces edges and isolated vertices.
Any isolated vertex in $B$ must be connected to at least two odd components of $G-B$.
The third edge might go to a third odd component, an even component or to one of the first two odd components.

Initially, let every vertex of $G$ be red and
every subdivision vertex blue.
We recolor a few vertices as follows.
In every even component, there exists a perfect matching and we recolor the subdivision vertices on the matching edges to red.

Consider the vertex sets $A$ and $B$ of the Edmonds-Gallai decomposition.
Contract the components of $A$ to vertices to get $A^*$.
The bipartite graph $(A^*,B)$ satisfies the Hall-condition by property $(iii)$. Therefore, we find a matching $M$ covering $B$.
We recolor the subdivision vertices of the matching edges in $M$ to red.
We continue with the odd components corresponding to the vertices of $A^*$ saturated by $M$.
In these components, we use property $(i)$ and find an almost perfect matching (if it is needed because the size of this component is greater than 1).
The subdivision vertices on these matching edges are colored red as well.
So far we only created red $P_3$-s separated by blue singletons.
What is left to consider is the union of odd components corresponding to unsaturated vertices of $A^*$.

Let $H$ be an odd component, which is a single vertex $x$. The $G$-neighbors of $x$ are in the set $B$. Suppose $y$ is a neighbor of $x$.
There are two different types of $y$-vertices depending on the location of its 3 neighbors.
Vertex $y$ is {\em ordinary} if it has no $G$-neighbor in $B$.
Vertex $y$ is {\em problematic} if it has precisely one $G$-neighbor in $B$.
Notice that a vertex $y$ in $B$ cannot have at least two $G$-neighbours in $B$ by property $(iii)$.

Assume $y$ is ordinary, and $x$ is a singleton odd component.
By the above coloring, vertex $y$ belongs to a red $P_3$ since $B$ was saturated, and $y$ has two blue subdivision vertices as neighbors in $S(G)$.
We recolor the subdivision vertex $v_{xy}$ on the edge $xy$ red and $y$ blue.
If the third $G$-neighbor $w$ of $y$ belongs to either an even component or a saturated odd component, then it already has a red neighbor and causes no trouble.
Notice that $w$ might be a singleton odd component or it may belong to an unsaturated odd component as well.
If $w$ is another singleton odd component, then we recolor $v_{wy}$ red and $y$ remains a singleton blue component, which in turn further decreases the number of isolated red vertices.
However, if $w$ belongs to a larger unsaturated odd component, then again we recolor $v_{wy}$ red and finish the coloring of this odd component as it was explained before for the larger saturated odd components using property $(i)$.

We perform this coloring step for all of those unsaturated singleton odd components, which have an ordinary neighbor from $B$. Observe that at this point among the colored vertices of $G$ the blue ones must be some ordinary vertices of $B$ and all three $G$-neighbors of these vertices are red and has
a red neighbor.

Continue the recoloring process by considering one-by-one the unsaturated odd singleton components, which only have problematic $G$-neighbors from $B$. Assume $x$ is such a singleton odd component, and $y$ is a problematic $G$-neighbor of $x$.
We recolor $v_{xy}$ red and $y$ blue.
Since $y$ has a unique $G$-neighbor $y'$ in $B$, we consider the third $G$-neighbor $x'$ of $y'$ (besides $y$ and the one determined by $M$).
If $x'$ already has a red neighbor, then we are done and we can continue the recoloring process. Otherwise $x'$ must belong to an odd component which still has isolated red vertices.

If $x'$ is an isolated red vertex and belongs to a large odd component $H'$, then denote one of its $G$-neighbors inside $H'$ by $z'$. Use property $(i)$ in $H'-z'$ and fix a perfect matching there and color the subdivision vertices red on these matching edges. Now recolor $v_{x'z'}$ red and $x'$ blue. This way $z'$ is no longer an isolated red and the blue component of $x'$ consists of $x'$ and $v_{x'y'}$.

If $x'$ is an isolated red vertex and belongs to a singleton odd component which only has problematic $G$-neighbors from $B$ then we continue the recoloring process with $x'$. Since $x'$ has degree 3 in $G$, we can select a $G$-neighbor $z$ different from $y'$ and do as above.
In this way, the color of $y'$ is unchanged and therefore $y$ remains in a blue $P_2$.
Altogether we created a crumby coloring locally around $x$ and $y$.
Now we continue with $x'$ and $z$ playing the role of $x$ and $y$ in the previous argument.
This process terminates and creates no loops, since every $B$-vertex is incident to 3 edges, one of which belongs to $M$. Therefore this process have to end by finding a vertex from one of the odd components which already had a red neighbor. Let us emphasize that this process cannot go back to any of the unique blue vertices of some large odd component because we cannot revisit the already visited problematic vertices of $B$.

At this point we either have a crumby coloring or there are some unsaturated large odd components which haven't been visited during the recoloring process.

Let $H$ be such a large odd component and $x \in H$ be an arbitrary vertex and consider a perfect matching in $H-x$ by property $(i)$. We recolor the subdivision vertices on these matching edges to red. Let $y$ be a $G$-neighbor of $x$ in $H$ and recolor $v_{xy}$ be red and $y$ blue. Since there was a matching edge $zy$ and both $z$ and $v_{yz}$ are red, moreover on the third edge $wy$ in $G$ incident to $y$, the subdivision vertex is blue but $w$ must be red. Indeed, since $w$ cannot be blue if $w\in H$ and if $w\in B$ then it cannot be blue because in that case we must have already considered this edge $wy$ and thus $H$ cannot be an non-visited large odd component. Hence $y$ and $v_{wy}$ form a blue $P_2$ together.

We can recolor all the remaining non-visited large odd components by the same argument. After all these steps a crumby coloring of $S(G)$ arises. \end{proof}

Next, we complement the previous result.
Here we allow all longer subdivisions.

\begin{lemma}
Let $G$ be a cubic graph.
Let $H$ be an arbitrary subdivision of $G$ such that every edge is subdivided at least twice.
The graph $H$ admits a crumby coloring.
\end{lemma}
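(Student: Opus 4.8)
The plan is to colour every original vertex (i.e.\ every vertex of $G$) red and then colour the subdivision vertices one subdivided edge at a time. Write $P_{uv}\colon u-x_1-\dots-x_k-v$ for the path replacing an edge $uv$ of $G$; since every edge is subdivided at least twice, $k\ge 2$. I aim for a colouring in which every red component is a star $K_{1,t}$ with $1\le t\le 3$ centred at an original vertex, or an isolated $P_2$ or $P_3$ lying inside some $P_{uv}$, and every blue component is a single vertex or a single edge inside some $P_{uv}$. Such a colouring is crumby: $K_{1,t}$ with $t\le 3$, $P_2$ and $P_3$ contain no $P_4$ and have no isolated vertex, while the blue subgraph has maximum degree $1$. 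As the original vertices are red, no blue component spreads across several $P_{uv}$, so it suffices to arrange (A) that the colouring of each $P_{uv}$ is locally fine --- no three consecutive vertices blue, no four consecutive vertices red, no red subdivision vertex isolated in red --- and (B) that every original vertex $u$ has a red neighbour and, along each edge on which the subdivision vertex next to $u$ is red, the subdivision vertex after it is blue. Condition (B) forces the red component through $u$ to be a star and kills any red $P_4$ bending at an original vertex.

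Call $u$ \emph{served} on the edge $uv$ if $x_1$ (the subdivision vertex next to $u$) is red while $x_2$ is blue, and similarly for $v$. I would first check by hand that for every $k\ge 3$ with $k\ne 5$ one can colour $x_1,\dots,x_k$ so that both $u$ and $v$ are served: colour $x_1$ and $x_k$ red and $x_2$ and $x_{k-1}$ blue (for $k=3$ the latter two coincide, giving $RBR$; for $k=4$ they are adjacent, giving $RBBR$), and colour the remaining path $x_2-\dots-x_{k-1}$, which has $k-2\in\{1,2\}\cup\{4,5,\dots\}$ --- never exactly $3$ --- vertices, with blue runs of length at most $2$ alternating with red runs of length $2$ and beginning and ending blue. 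For the two exceptional subdivision numbers $k=2$ and $k=5$ no colouring serves both endpoints, but for each there is a colouring serving exactly one prescribed endpoint ($RB$, respectively $RBRRB$, and their mirror images) and one serving neither ($BB$, respectively $BRRRB$); all of these satisfy (A).

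It then remains to decide, for each edge subdivided $2$ or $5$ times, which endpoint is served, so that (B) still holds. Here I would orient $G$ with minimum out-degree at least $1$ --- possible since every component of a cubic graph contains a cycle --- and fix one out-edge $c(u)$ at each vertex $u$. On an edge $uv$ subdivided $k$ times with $k\ge 3$ and $k\ne 5$, use the both-served pattern; on an edge $uv$ subdivided $2$ or $5$ times use the pattern serving $u$ if $c(u)=uv$, the one serving $v$ if $c(v)=uv$, and the neither-served pattern otherwise. Since $uv$ is an out-edge of at most one of $u,v$, this is well defined, and every vertex $u$ is served on $c(u)$; hence (B) holds, (A) holds by construction, and $H$ gets a crumby colouring.

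The step I expect to need the most care --- the main obstacle --- is spotting that, besides the obvious subdivision number $2$, a subdivision number of $5$ is also too small to serve both endpoints, and then checking that an original vertex served on all three of its edges spans a red $K_{1,3}$, which is $P_4$-free precisely because the subdivision vertex after each of its red neighbours is forced blue. With these two values isolated, the sink-free orientation only has to distribute the single compulsory red neighbour of each vertex among the ``short'' edges.
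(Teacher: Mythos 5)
Your proof is correct, but it takes a genuinely different route from the paper. The paper colors the branch vertices of $G$ \emph{blue} and colors the interior of each subdivided edge by a fixed pattern ($rr$, $rrr$, $rrrb$, $rrbrr$, \dots) designed so that the endpoints stay blue singletons; the single bad length there is $4$ subdivision vertices, and the resulting blue $2$- and $3$-stars are then repaired by local recolorings around their centers. You instead color the branch vertices \emph{red}, design per-edge patterns that ``serve'' an endpoint (red neighbor followed by a blue vertex), correctly isolate $k=2$ and $k=5$ as the only lengths where both ends cannot be served (for $k=5$ the middle vertex would be either an isolated red or the center of a blue $P_3$), and then resolve these short edges globally via a sink-free orientation of $G$, which exists since every component of a cubic graph contains a cycle; choosing one out-edge per vertex guarantees each branch vertex is served at least once, so every red component is a star $K_{1,t}$ at a branch vertex or a $P_2$/$P_3$ inside a path, and every blue component is a vertex or an edge. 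I checked your patterns and the run-length argument showing segment lengths $k-2\neq 3$ are realizable; everything goes through. What each approach buys: the paper's argument is entirely local but needs a correction phase whose case analysis must be verified not to cascade; yours avoids any repair step and gives a cleaner description of the final components, at the price of invoking the (easy, standard) existence of a sink-free orientation, which is where your argument uses cubicity (or at least that each component contains a cycle) rather than just the subdivision structure.
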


\begin{proof}
Let us color the original vertices of $G$ blue.
We find that almost any subdivided edge admits a crumby coloring such that the end-vertices are singleton blues.
The only exception is the case with 4 subdivision vertices.
In particular, we use the following colorings for the internal vertices ($r$,$b$ stands for red and blue, respectively):
$rr$, $rrr$, $rrrb$, $rrbrr$, $rrrbrr$, $rrbbrrr$, $rrbrrbrr$ etc. %r,b itt most be lett vezetve, később már nem magyaráznám

Let us use these particular colorings on $H$.
We might create some blue stars with 2 or 3 leaves.
Apart from that, this coloring satisfies the crumby conditions.
Now we recolor the problematic blue centers of these stars red.
If the vertex $c$ is such a center, and there was a blue 3-star at $c$, then we recolor the neighbor $n_1$ of $c$ red and recolor the neighbor $n_2$ of $n_1$ blue.
If vertex $c$ was the center of a blue 2-star, then we have to consider two cases according to the red neighbor $v$ of $c$.
If $v$ was the end-vertex of a red $P_3$, then we do the same recoloring as in the previous case, but also recolor $v$ to blue.
If $v$ was the end-vertex of a red $P_2$, then the recoloring of $c$ creates a red $P_3$ and we are done.

The process terminates with a crumby coloring of $H$.
\end{proof}

Motivated by the results of this section, we prove the existence of crumby colorings in a generalized setting, namely for genuine subdivisions of subcubic multigraphs. In order to prove a generalization of Theorem~\ref{t:1-sub}, we use a different approach, and to make the proof more transparent we state our result for multigraphs.

\begin{theorem} \label{t:gen}
Every genuine subdivision $S(G)$ of any subcubic multigraph $G$ admits a crumby coloring.
\end{theorem}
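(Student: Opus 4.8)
The plan is to reduce to the cubic case and then emulate the proof of Theorem~\ref{t:1-sub}, replacing "recolour the subdivision vertex of a matching edge red" by "colour the whole subdivision \emph{path} with a suitable pattern'', where the patterns are the ones used in the preceding lemma. First I would dispose of the easy components. A genuine subdivision of a forest is again a subcubic forest, so these components are handled by Theorem~\ref{t:treesplus} applied to each tree; a genuine subdivision of a cycle is a cycle, and a direct argument (chop the cycle into blocks of type $rrb$, adjusting the remainder with one or two blocks of type $rrbb$ or, for $C_5$, using $rrrbb$) gives a crumby colouring of any cycle. So we may assume $G$ is connected, contains a vertex of degree $3$, and is not a cycle. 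Suppressing all degree-$2$ vertices turns $G$ into a connected multigraph $G^{\circ}$ with every degree in $\{1,3\}$, and $S(G)$ is simultaneously a genuine subdivision of $G^{\circ}$, since each edge of $G^{\circ}$ still carries at least one subdivision vertex. Removing the leaves of $G^{\circ}$ one by one — each is the far end of a pendant subdivided path in $S(G)$ — and extending a crumby colouring of the remainder along such a path (always possible, as a path admits a crumby colouring with the colour of one end-vertex prescribed, and only the few boundary patterns at the branch vertex have to be matched), we reach the case where $G^{\circ}$ is a cubic multigraph. Rename it $G$ and put $H:=S(G)$.

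Now I would run the Edmonds--Gallai machinery exactly as in Theorem~\ref{t:1-sub}. Fix a maximum matching $M$ of $G$, take $A,B,C$ as in Theorem~\ref{e-g}, choose perfect matchings of the even components of $G-B$, near-perfect matchings of the factor-critical odd components, and a matching between the contracted odd components $A^{*}$ and $B$ saturating $B$; together these select an edge set $F$ of $G$ that is a matching covering every vertex except one distinguished vertex in each odd component attached to an unsaturated vertex of $A^{*}$. Colour the original vertices of $G$ red and the subdivision vertices blue, then correct the subdivision paths. For an edge of $F$ with subdivision path $u\,s_1\cdots s_k\,v$: if $k=1$, make it a red $P_3$ as in Theorem~\ref{t:1-sub}; if $k$ is large enough to leave room, place a red $P_2$ (or $P_3$) at each end and colour the interior crumbily with the lemma's patterns $rr,rrr,rrrb,rrbrr,rrrbrr,rrbbrrr,rrbrrbrr,\dots$ (the single exceptional interior length handled by the lemma's follow-up recolouring move); and for the short lengths where a red component cannot reach both ends — the critical case being $k=2$, where the path $u\,s_1\,s_2\,v$ would otherwise become an all-red forbidden $P_4$ — recolour \emph{both} $u$ and $v$ blue and carry the red $P_2$ ($rr$) on the interior, so $u$ and $v$ become blue singletons along that edge. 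For an edge not in $F$, colour its subdivision path so that the edge contributes only blue singletons and blue $P_2$'s (first and last subdivision vertex blue, interior crumby), except that at an endpoint already forced blue by a short $F$-edge one of its two non-$F$ edges must instead start with an $rr$-segment, to keep that endpoint's blue degree at most $1$. Finally, for the distinguished unmatched vertex $x$ of an odd component, repeat verbatim the fix-up from the proof of Theorem~\ref{t:1-sub}: take a near-perfect matching of the component minus $x$, make $x$ the end of a red $P_2$ along one incident path, recolour the far branch vertex $y$ blue, and colour the intervening subdivision vertices so that $y$ lands in a blue $P_2$ and the rest is crumby; the check at $y$ is the same finite local verification as there. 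After this, a routine case analysis at each original vertex (finitely many local patterns), at each subdivision vertex, and along each subdivision path confirms that the red subgraph has minimum degree $1$ and no $P_4$ and the blue subgraph has maximum degree $1$, so $H=S(G)$ is crumbily coloured.

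I expect the main obstacle to be the second step, namely the bookkeeping that makes the variable-length subdivision paths cooperate with the global matching structure. Unlike the $1$-subdivision case, a matched pair $u,v$ cannot in general be joined by a single red $P_3$; one must decide, edge by edge, whether to keep an original vertex red (splitting its $F$-path into two end-fragments joined by a crumby bridge) or to flip it to blue and host the red component inside the path — and, because $F$ is a matching, this is determined locally at each vertex, but the decision at a vertex forced blue propagates a constraint onto the colourings of its non-$F$ paths, which must still be consistent with the decisions made at their other endpoints and with the Edmonds--Gallai fix-up for the unmatched factor-critical components. Ensuring that every red original vertex still acquires a red neighbour without creating a red $P_4$, that every blue original vertex keeps blue degree at most $1$, and that all this survives around the length-$2$ edges and around the exceptional interior length that forces the lemma's $rrrb$ pattern, is the delicate part of the argument.
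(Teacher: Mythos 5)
Your overall strategy (Edmonds--Gallai plus per-edge path patterns) is in the spirit of the paper, but the roles are reversed relative to the paper's proof --- you host the red components on the matching-edge paths and treat the non-matching paths as ``blue filler'', whereas the paper (Lemma~\ref{l:tp}) colors the non-matching edges \emph{first}, so that every original vertex stays red and is never the end of a red $P_3$, and only then colors the matching-edge paths, fixing the residual red singletons in four explicit cases. This reversal creates a genuine gap at exactly the spot you flag as ``delicate''. Your rule for an $F$-edge with two subdivision vertices forces \emph{both} endpoints blue, and your repair (``one of its two non-$F$ edges must instead start with an $rr$-segment'') is not always available. Concretely, take $G=K_4$ on $\{u,v,w_1,w_2\}$ with perfect matching $\{uv,\,w_1w_2\}$, subdivide both matching edges twice and the four non-matching edges once. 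Your scheme makes all four branch vertices blue; each non-$F$ path then consists of a single subdivision vertex whose two neighbours are blue, so colouring it red creates a red singleton and colouring it blue gives each branch vertex two blue neighbours --- no $rr$-segment fits, and no local fix in your framework resolves this (the graph itself is fine: it has a crumby colouring, e.g.\ by Lemma~\ref{l:tp} or Theorem~\ref{t:subk}, just not one of the shape your scheme prescribes).

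There are further consistency problems of the same kind: a non-$F$ path with exactly three subdivision vertices cannot be coloured ``first and last blue, interior crumby'' (the middle vertex becomes a red singleton), and ending such a path with an $rr$-segment at a red endpoint that already carries a red $P_2$ on its $F$-edge produces a red $P_4$. So the per-edge patterns you list do not glue together as claimed, and the ``routine case analysis'' at the end is precisely where the argument fails. A secondary issue is your reduction to a cubic core: suppressing degree-$2$ vertices can create loops and parallel edges, so the matching/Edmonds--Gallai step would need to be justified for multigraphs; the paper avoids this entirely by working with the subcubic graph $G$ directly and using that $G-M$ is a disjoint union of paths and cycles. To repair your argument you would essentially have to adopt the paper's ordering: keep all original vertices red, colour the non-matching paths so that no original vertex becomes the end of a red $P_3$ (second column of Table~\ref{tab:utszin}), and only afterwards colour the matching paths, treating the short lengths ($1$, $2$, $5$ subdivision vertices) by the paper's Cases 1--4 rather than by forcing endpoints blue up front.
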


\begin{proof}
We may assume the considered graphs are connected, otherwise we can repeat the same argument on each connected component. The graphs are also loopless, since loop edges do not make any difference regarding the conditions of crumby colorings.

Suppose to the contrary there exists a connected subcubic loopless multigraph and a corresponding genuine subdivison, which does not admit a crumby coloring. Let $G$ denote the smallest such graph, i.e. $G$ has the least number of vertices and among those the least number of edges. Fix such problematic pair $(G,S(G))$.

As the first step, we summarize crumby colorings of paths $P_k$ on $k$ vertices for different purposes, which we use later. In Table \ref{tab:utszin}, we highlighted by capital letters those cases, in which the corresponding aim in the header is not attainable. Notice for $k\ge 8$ all of these goals are achievable because from $k\ge11$ one can get a crumby coloring with a certain goal of $P_k$ by extending the crumby coloring of $P_{k-3}$ between two subdivision vertices of different colors with $rrb$. Also if the two end-vertices are required to be singletons regardless of their colors, then we can achieve this already for $k\ge~7$.
%\zoltan{Szerintetek hogyan kellene fogalmaznunk, hogy a bíráló is örüljön? (szerintem teljesen érthető, hogy mire gondoltunk)}

\begin{table}[h!]
    \centering
\[
\begin{array}{|c|c|c|c|c|c|c|}
    \hline
    k & \makecell{\mathrm{one ~endpoint} \\ \mathrm{is~a~singleton} \\ \mathrm{blue,~and} \\ \mathrm{the~other~is}\\ \mathrm{in~a~red~} K_2} & \makecell{\mathrm{both} \\ \mathrm{endpoints~are} \\ \mathrm{in~a~red~} K_2} & \makecell{\mathrm{one ~endpoint} \\ \mathrm{is~a~singleton} \\
    \mathrm{red,~and} \\ \mathrm{the~other~is}\\ \mathrm{in~a~red~} K_2} & \makecell{\mathrm{both} \\
    \mathrm{endpoints} \\ \mathrm{are~red} \\ \mathrm{singletons}} & \makecell{\mathrm{both} \\
    \mathrm{endpoints} \\ \mathrm{are~blue} \\ \mathrm{singletons}} & \makecell{\mathrm{one ~endpoint} \\
    \mathrm{is~a~singleton} \\
    \mathrm{red,~and~the} \\ \mathrm{other~is~a}\\ \mathrm{singleton~blue}} \\
    \hline
    3 & rrb & RBR & RBR & rbr & RRB & RRB \\
    \hline
    4 & RRBB & RRBR & rrbr & rbbr & brrb & RRBB \\
    \hline
    5 & RRBBR & rrbrr & rrbbr & RRBBR & brrrb & rbrrb \\
    \hline
    6 & rrbrrb & rrbbrr & RRBBRR & rbrrbr & BBRRRB & rbbrrb \\
    \hline
    7 & rrbbrrb & RRBRRBR & rrbrrbr & rbrrrbr & brrbrrb & rbbrrrb \\
    \hline
    8 & rrbbrrrb & rrbrrbrr & rrbbrrbr & rbrrrbbr & brrbbrrb & rbrrbrrb \\
    \hline
    9 & rrbrrbrrb & rrbbrrbrr & rrbbrrbbr & rbbrrrbbr & brrrbbrrb & rbbrrbrrb \\
    \hline
    10 & rrbbrrbrrb & rrbbrrrbrr & rrbbrrrbbr & rbrrbrrbbr & brrrbbrrrb & rbbrrbbrrb \\
    \hline
\end{array}
\]
\caption{Crumby colorings of $P_k$ for different purposes.}
    \label{tab:utszin}
\end{table}
%\janos{igazából azt is lehetne, h csak ott használunk NAGY betűt, ahol a probléma van.} \zoltan{Lehetne, de mégsem írtam át.}

Before the coloring step of the proof, which leads to a contradiction, we need to observe some structural properties of $G$ and $S(G)$.

We may assume in the genuine subdivison $S(G)$ every edge is subdivided by at most 4 vertices.
Otherwise, we can delete an edge $uv$ with at least 5 subdivision vertices, and consider a crumby coloring of the remaining subdivided graph $(G',S(G'))$.
It can be completed to a crumby coloring of $S(G)$ by using one of the last three columns of Table \ref{tab:utszin} depending on the given colors of $u$ and $v$.

We may assume the minimum degree of $G$ is at least 2. Otherwise, after the deletion of a vertex $v$ of degree 1, the remaining subdivided graph $(G',S(G'))$ admits a crumby coloring by assumption. This coloring can be completed to a crumby coloring of $S(G)$ by elementary considerations.
%prescribing the other color for the subdivision vertex next to the only neighbor of $v$.

Moreover, we claim $G$ is a 3-regular multigraph.
Suppose to the contrary there is a vertex $v$ of degree 2. There are either one or two neighbors of $v$, and we deal with these two cases separately.

{\it Case a1):} Suppose $v$ has only one neighbor $u$ and two parallel edges between them.
If $\deg(u)=2$, then this is the whole graph $G$.
In that case, $S(G)$ is a cycle of length at least 4, thus it has a crumby coloring as Table \ref{tab:cycle} shows.
It is also true that a cycle has a crumby coloring even if it has an arbitrary vertex with a prescribed color since we can rotate the crumby colorings of Table \ref{tab:cycle} accordingly.
%Ide bemásolom a későbbi táblázatot és ott visszahivatkozok erre

\begin{table}[!h]
    \begin{center}
\[
    \begin{array}{|c|c|c|c|c|c|c|}
    \hline
    k & 3 & 4 & 5 & 6 & 7 & 8 \\
    \hline
    \mathrm{crumby~coloring~of~}C_k & rrb & rrrb & rrrbb & rrbrrb & rrbrrrb & rrrbrrrb \\
    \hline
    \end{array}
\]
    \end{center}
    \caption{For larger $k$, start with $rrb$ and continue with the crumby coloring of $C_{k-3}$.}
    \label{tab:cycle}
\end{table}

Suppose $\deg(u)=3$, and the other neighbor of $u$ is $w$. Consider the induced subgraph $H$ of $S(G)$ on $V\setminus\{u,v\}$. By the indirect assumption, $H$ admits a crumby coloring.
Therefore, we assume the color of $w$ is fixed.
We use the opposite color on the closest subdivision vertex along $uw$.
If $w$ is blue, then color the vertices along $uw$ by the sixth column of Table \ref{tab:utszin}.
However, if $w$ is red, then use the first column except for $k=4$, in which case we use $rrbr$.
Let us mention if $uw$ has only one subdivision vertex, then the color of the subdivision vertex is determined, it differs from the color of $w$, and the color of $u$ is red. %\zoltan{Megemlítsük azt, hogy ha $uv$-n csak egy osztópont van, akkor a színváltás után az tulajdonképpen egy 2 hosszú út, amit a táblázat nem tartalmazott, de nyilván rr-nek vagy rb-nek színeznénk?}

At this point, $u$ is colored and it is either a singleton blue or a singleton red, or belongs to a red $K_2$, hence we can finish the crumby coloring of $S(G)$ by rotating appropriately the coloring of the cycle formed by the two parallel edges between $u$ and $v$. Note that even if $u$ belongs to a red $K_2$ along $uw$, and the component of $u$ on the cycle is a red $P_3$, we can rotate the cycle so that $u$ becomes the middle vertex of the red $P_3$.
Therefore, we can avoid creating a red $P_4$. We get a crumby coloring of $S(G)$, which is a contradiction.

{\it Case a2):} Suppose $v$ has two distinct neighbors $u$ and $w$. Let $G'$ denote the graph, which we get from $G$ by deleting $v$, and adding an edge $uw$. Note that this may create parallel edges (this is the reason why we consider multigraphs instead of simple graphs). Consequently, there is a genuine subdivision $S(G')$ of $G'$, which is isomorphic to $S(G)$, but that contradicts the assumption that $G$ was a smallest counterexample.
%has the least number of vertices among those whose genuine subdivision does not admit a crumby coloring.

Hence $G$ must be a $3$-regular multigraph. However, we claim that actually $G$ is a $3$-regular (simple) graph.
Suppose to the contrary there is a vertex $v$ incident to some parallel edges.

{\it Case b1):} Suppose $v$ has only one neighbor $u$, and there are three parallel edges between them, so it is the whole graph $G$. Let the 3-tuple $(x,y,z)$ denote the number of subdivision vertices on the three edges, and assume $x\le y\le z$. If $2\le x$, then color $u$ and $v$ blue and use the fifth column of Table \ref{tab:utszin} on each edge.
In this way, we get a crumby coloring except for $x=y=z=4$.
However, that specific graph admits a crumby coloring as illustrated in Figure \ref{fig:3par}.

If $x=1$, then color $u,v$ red and the single subdivision vertex between them blue.
We use the second column of Table \ref{tab:utszin} for the other two edges. Thus we get a crumby coloring, unless $y=1$ and $z\in\{1,2,5\}$. In all other cases both $u$ and $v$ get a red neighbor.
The remaining three specific graphs also have crumby colorings, see Figure \ref{fig:3par}.

\begin{figure}[!h]
    \centering
    \includegraphics[width=0.9\textwidth]{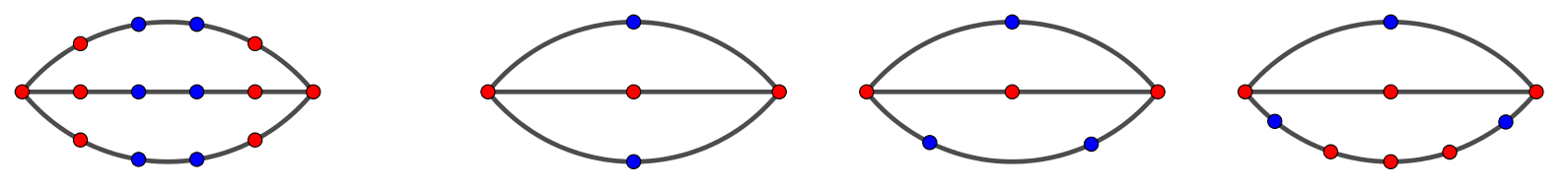}
    \caption{Crumby colorings for the following cases: $(4,4,4)$ and $(1,1,z)$ for $z\in\{1,2,5\}$}
    \label{fig:3par}
\end{figure}

{\it Case b2):} Suppose $v$ has exactly two neighbors $u$ and $w$, and assume $u$ is adjacent to $v$ by two parallel edges.
The third edge incident to $u$ goes either to $w$ or another vertex $q$ (see Figure \ref{fig:2par}). In the former case, let $G'$ denote the graph, which we get from $G$ by deleting $u,v,w$. In the latter case, let $G''$ denote the graph, which we get from $G$ by deleting $u$ and $v$.
Fix the crumby colorings of $S(G')$ and $S(G'')$ existing by the induction hypothesis. The contradiction arises by extending these crumby colorings to crumby colorings of $S(G)$ as follows.

\begin{figure}[!h]
    \centering
    \includegraphics[width=0.6\textwidth]{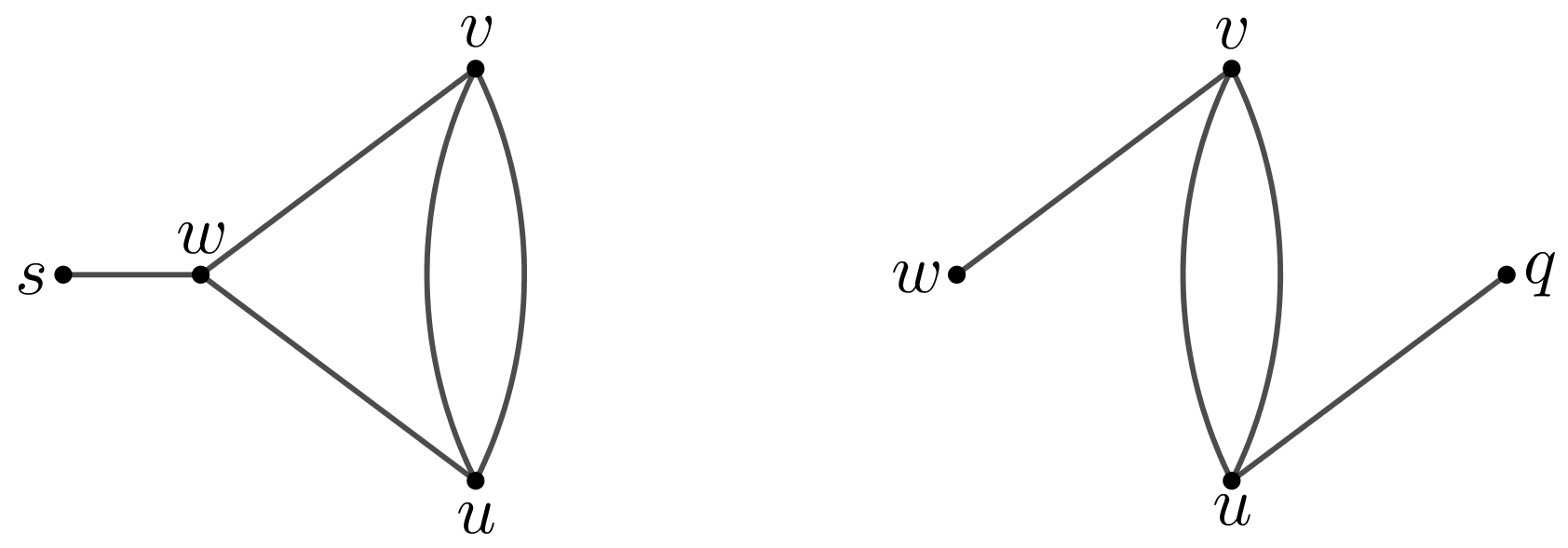}
    \caption{The two possible situations in {\it Case b2)}}
    \label{fig:2par}
\end{figure}

We start the extension by changing the color of the first subdivision vertex next to $s$ and in the other case next to $w$ and $q$.
We continue repeating $rrb$ periodically along the edges $sw$, $wv$ and $wu$ and in the other setup along $wv$ and $qu$. Thus each of $u$ and $v$ becomes either a red vertex in a $P_2$ component or a singleton in any color.
The subdivided parallel edges between $u$ and $v$ can be considered as a cycle of length between 4 and 10 with prescribed colors on two non-adjacent vertices.
Hence in Table \ref{tab:pre2cycle}, we listed the crumby colorings for this cycle with respect to the prescribed colors of $u$ and $v$. Note that extending the fixed colorings of $S(G')$ and $S(G'')$ by the colorings in Table \ref{tab:pre2cycle} becomes crumby only if $u$ and $v$ are singletons in some color. Even in that subcase, there are some exceptions that we leave for later. These instances are striked out in the table. %nem tudom, hogy mennyire kellene megemlíteni azt a részproblémát, hogy egy körben két nem-szomszédos csúcs színe adott, akkor van-e crumby coloring? A táblázatból látható, hogy csak a hiányzó 5 esetben nem fejezhető be, egyébként igen...

\begin{table}[!h]
    \begin{center}
\[
    \begin{array}{|c|c|c|c|}
    \hline
    \makecell{\mathrm{number~of~subdivision} \\ \mathrm{vertices~on~the}\\ \mathrm{parallel~edges}} & u,v \mathrm{~are~both~blue} & \makecell{u \mathrm{~is~blue,~} v \mathrm{~is~red} \\ \mathrm{or~vice~versa}} & u,v \mathrm{~are~both~red} \\
    \hline
    (1,1) & - & BbRr & RrRb~^{\ast} \\
    \hline
    (1,2) & - & BrRrb & RrRbb~^{\ast} \\
    \hline
    (1,3) & - & BrRbrr & RbRrbr \\
    \hline
    (1,4) & - & BrRbbrr & RbRrbbr \\
    \hline
    (2,2) & BrrBrr & -~^{\ast} & RrbRrb \\
    \hline
    (2,3) & BrrBrrr & BbrRbrr & RbbRrbr \\
    \hline
    (2,4) & BrrBbrrr & BbrRbrrr & RbbRrbbr \\
    \hline
    (3,3) & BrrrBrrr & BrrbRrrb~^{\ast} & RrbbRrbb \\
    \hline
    (3,4) & BrrrBbrrr & BrrbRrbrr & RrbrRbrrb \\
    \hline
    (4,4) & BbrrrBbrrr & BrrbbRrbrr & RrbbrRbrrb \\
    \hline
    \end{array}
\]
    \end{center}
    \caption{Crumby colorings of cycles of small length with prescribed colors on two non-adjacent vertices (capital letters corresponds to the color of $u$ and $v$).}
    \label{tab:pre2cycle}
\end{table}
%\zoltan{Ellenőrizzétek, hogy nem írtam-e el valamit a táblázatban!}

However, if any of $u$ and $v$ is in a red $P_2$ component, then the extended colorings remain crumby unless the corresponding vertex is the end of a red $P_3$ in those colorings, which are marked by $~^{\ast}$ in Table \ref{tab:pre2cycle}. Observe if $u$ or $v$ is prescribed blue (the striked out instances), then we are allowed to change its color to red unless $w$ is a common neighbor of $u$ and $v$ in $G$, and both $wu$ and $wv$ has only 1 subdivision vertex and $w$ is red. In this special case, we change the color of the only subdivision vertex on $wv$ to blue and thus $v$ gets red and we can continue with the second column of Table~\ref{tab:pre2cycle} (without changing the prescribed blue color of $u$). On the flip side, if $u$ or $v$ is prescribed to be a singleton red (in certain instances marked by $^{\ast}$), then we are free to change its color to blue. Using these observations, the remaining cases can be finished in an alternate way. The remaining cases are shown in Figure \ref{fig:col1},\ref{fig:col2} and \ref{fig:col3} separately for the three columns of Table \ref{tab:pre2cycle}.

\begin{figure}[!h]
    \centering
    \includegraphics[width=0.8\textwidth]{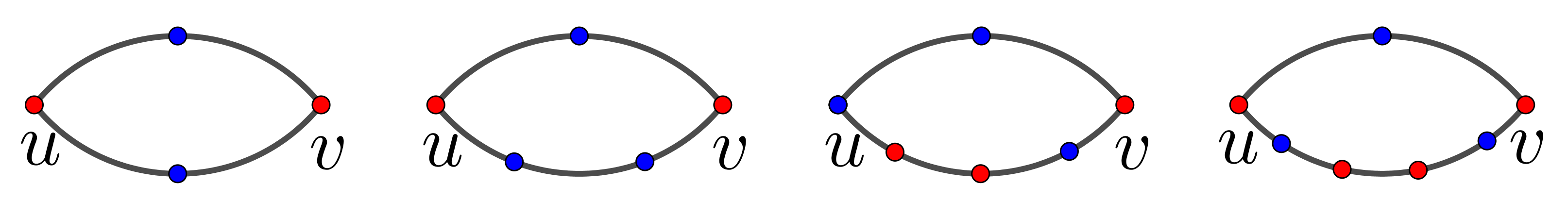}
    \caption{In these graphs both $u$ and $v$ are prescribed to be blue, however we must change color for at least one of them to red creating a red $P_3$ component.}
    \label{fig:col1}
\end{figure}

\begin{figure}[!h]
    \centering
    \includegraphics[width=0.7\textwidth]{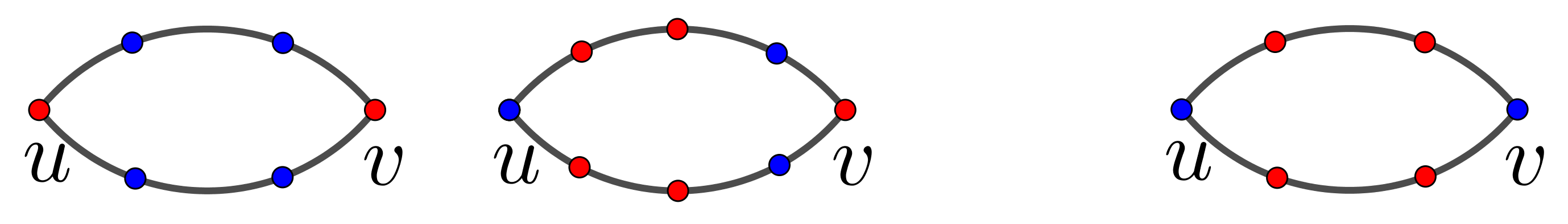}
    \caption{In the two graphs on the left $u$ is prescribed to be blue, and $v$ in a red $P_2$ component, while in the graph on the right $u$ is prescribed to be blue, and $v$ a singleton red.}
    \label{fig:col2}
\end{figure}

\begin{figure}[!h]
    \centering
    \includegraphics[width=0.8\textwidth]{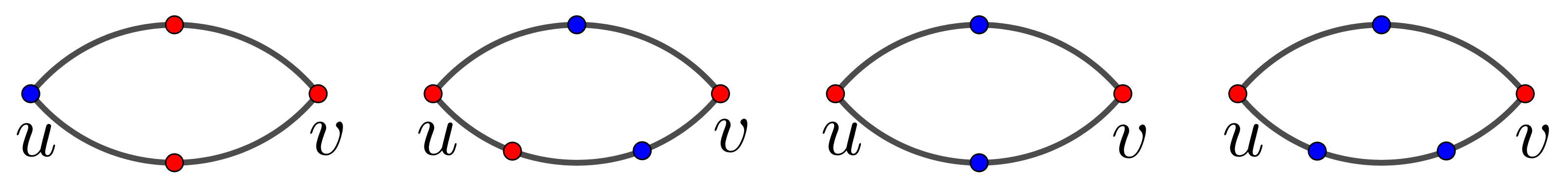}
    \caption{In the two graphs on the left $u$ is prescribed to be a singleton red, and $v$ in a red $P_2$ component, while in the two graphs on the right both of them are prescribed to be in a red $P_2$ component.}
    \label{fig:col3}
\end{figure}

Thus (to avoid the previous contradictions) $G$ must be a $3$-regular (simple) graph. Here comes the coloring part of the proof.
Let us call the vertices of $G$ {\it original} in the subdivided graph $S(G)$.
We start the coloring process by using the second column of Table \ref{tab:utszin} for those edges, which have either 3 or 4 subdivision vertices. At this point, all  colored original vertices are red and already have a red neighbor.
Also, every uncolored original vertex is incident to 3 edges, each of which contains at most 2 subdivision vertices.

Consider the uncolored original vertices one-by-one, which are incident to at least two edges with exactly 1 subdivision vertex.
Such an original vertex $v$ gets color blue, and we use the first column of Table \ref{tab:utszin} on the three incident edges.
Hence all its $G$-neighbors are red and have a red neighbor. Note that from now on there are both red and blue original vertices.
It still holds that the red ones have a red neighbor. %and they are not the end of any red $P_3$ components.
Furthermore, the uncolored original vertices are not adjacent (in $G$) to any of the blue original vertices. Perform this step repeatedly until there are only uncolored original vertices, which are incident to at least two edges with exactly 2 subdivision vertices.

Consider now uncolored original vertices, which are incident to one edge with exactly 1 subdivision vertex. Assume that $v$ is such a vertex, and let $vx$ be the edge with exactly 1 subdivision vertex. If $x$ is uncolored, then both other edges incident to $x$ have exactly 2 subdivision vertices.
This means the $G$-neighbors of $v$ and $x$ are either uncolored or red (which already have a red neighbor). If the $G$-neighbor is red, then we use $rbbr$ on the corresponding edge.
If the $G$-neighbor is uncolored, then use $rrbr$ so that the uncolored $G$-neighbor gets the red neighbor.
We color the only subdivision vertex on $xv$ red.
Now $x$ and $v$ become the endpoints of a red $P_3$ component, but all of their $G$-neighbors are red and have a red neighbor. %but they cannot be an endpoint of any red $P_3$ component.

%Therefore color all four of these  edges with 2 subdivision vertices using the second column of Table \ref{tab:utszin} so that the uncolored $G$-neighbors of $x$ and $v$ always get a red neighbor and the red $G$-neighbors do not get another red neighbor (it might occur later on that an original vertex become an end of a red $P_3$ component). If both $x$ and $v$ already have a red $G$-neighbor then color the only subdivision vertex $c_{xv}$ on $xv$ blue. If all of the $G$-neighbors of $x$ and $v$ are uncolored then color $c_{xv}$ red. Note that this way $x$ and $v$ become the endpoints of a red $P_3$ component. If exactly one of $x$ and $v$ have a red $G$-neighbor, say $v$, then

If $x$ has already been colored red, then we color the only subdivision vertex on $xv$ blue.
If at least one of the other two $G$-neighbors of $v$ is also red, then by using $rrbr$ or $rbrr$ appropriately on the two edges, we can provide that $v$ and the possible uncolored $G$-neighbor of $v$ are red and have a red neighbor.
Meanwhile the components of the already red original vertices do not grow.
If both other $G$-neighbors of $v$ are uncolored, then color $v$ blue and use $rrrb$ on these edges.
In this step, we might create red original vertices, which are at the end of some red $P_3$ components.
However, let us emphasize that the $G$-neighbors of the blue original vertices are still  red and have a red neighbor.

Hence the uncolored original vertices must have 3 incident edges with exactly 2 subdivision vertices.
If at least one of the $G$-neighbors of $v$ has been colored (by the previous observation its color must be red), then use $rrbr$ or $rbrr$ appropriately again on these edges to make sure that $v$ and the possible uncolored $G$-neighbors of $v$ become red and have a red neighbor.
Meanwhile the components of the already red original vertices do not grow. On the other hand, if the $G$-neighbors of $v$ are still uncolored, then color $v$ blue and use $rrrb$ on these edges.

The coloring is almost finished, since every original vertex is colored, but there might be some edges which have not been considered, yet.
Throughout the coloring process if an original vertex got blue, then all subdivision vertices of incident edges got colored at the same time, and its $G$-neighbors became red.
Thus if an edge has not been considered yet, then it has at most 2 subdivision vertices and both of its endpoints are red. Hence we can complete the coloring with either $b$ or $bb$ at these subdivision vertices.

The coloring process terminates in a crumby coloring of $(G)$, that is a contradiction.
%Therefore color all of these four edges with 2 subdivision vertices using the second column of Table \ref{tab:utszin} so that the uncolored $G$-neighbors of $x$ and $v$ surely get a red neighbor, and if any of them are red already then we we are able to flip the order, and the corresponding $G$-neighbor ($x$ or $v$) can get the red neighbor on that edge. If both $x$ and $v$ have a $G$-neighbor that is red then both of them get a red neighbor hence the only subdivision vertex on $xv$ can be blue. However, if either of them has only uncolored $G$-neighbors then the subdivision vertex on $xv$ is
\end{proof}

\section{Outerplanar graphs} \label{s:outer}

We know that Conjecture~\ref{ccc} holds for trees and fails in general for $2$-connected planar graphs.
A natural minor-closed class between the aforementioned classes is the class of outerplanar graphs.
As the first step, we prove the following.

\begin{theorem}\label{t:outer2}
Let $G$ be a $2$-connected subcubic outerplanar graph and let $v$ be a vertex of $G$. We may prescribe the color of $v$ and find a crumby coloring of $G$.
\end{theorem}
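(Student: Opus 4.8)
The plan is to use the tree-like structure of $2$-connected outerplanar graphs and induct on the number of chords. Recall that the outer boundary of a $2$-connected outerplanar graph $G$ is a Hamilton cycle $C$, every remaining edge is a chord, no two chords cross, and --- since $G$ is subcubic --- each vertex lies on at most one chord. Consequently the weak dual $T$ of $G$ (vertices: the bounded faces; two faces adjacent when they share a chord) is a tree on $c+1$ vertices, where $c$ is the number of chords. I would prove the statement by induction on $c$, with the cycle as the base case.

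If $c=0$ then $G=C_n$ with $n\ge 3$. A crumby coloring of $C_n$ is exactly a cyclic sequence of \emph{red blocks} of length $2$ or $3$ alternating with \emph{blue blocks} of length $1$ or $2$: then every red component is a $P_2$ or a $P_3$ and every blue component is a $P_1$ or a $P_2$. Such a cyclic partition of $n$ exists for every $n\ge 3$, because with $k\ge 1$ blocks of each colour the attainable totals run over the whole interval $[3k,5k]$, and the union of these intervals is all of $\{3,4,5,\dots\}$. Rotating the pattern, I can place $v$ in a block of whichever colour is prescribed, which finishes the base case; the handful of smallest cycles can also just be listed by hand.

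Now let $c\ge 1$, so $T$ has at least two leaves. A leaf face $F$ carries exactly one chord $xy$ on its boundary, and $\partial F$ minus $xy$ is a subpath $P=xu_1u_2\dots u_my$ of $C$ with $m\ge 1$; moreover every $u_i$ has degree $2$ in $G$, since a chord at $u_i$ would either have to be drawn inside the face $F$ or cross the chord $xy$. Distinct leaf faces give vertex-disjoint paths $P$ (a degree-$2$ vertex lies on a unique bounded face), so I may choose a leaf face whose path $P$ avoids $v$ among its interior vertices. Set $G'=G-\{u_1,\dots,u_m\}$. The removed ear $P$ is parallel to the chord $xy$, which survives in $G'$ --- now as an outer edge --- so $G'$ is again $2$-connected, subcubic and outerplanar, has $c-1$ chords, and still contains $v$; by the induction hypothesis it has a crumby coloring with $v$ of the prescribed colour. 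I would note that in this coloring $x$ and $y$ have degree $2$ and are adjacent, so only a short list of local states is possible at $x$ and at $y$.

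It remains to colour $u_1,\dots,u_m$, that is, to crumby-colour the path $x,u_1,\dots,u_m,y$ with its two endpoints already coloured, so that the whole of $G$ stays crumby: no blue vertex acquires two blue neighbours at $x$, at $y$, or at any $u_i$; no red $P_4$ appears; and no $u_i$ is left as an isolated red vertex. For all but finitely many combinations of $m$ with the states of $x$ and $y$, I would simply read off an admissible pattern along the path from a short table, completely analogous to Table~\ref{tab:utszin} used in the proof of Theorem~\ref{t:gen}. For the few remaining short ears in bad endpoint configurations --- for instance $m=1$ with both $x$ and $y$ blue, or $m=1$ with one of $x,y$ an end of a red $P_3$ while the other sits in a blue $P_2$ --- I would instead repair the coloring by recolouring a bounded neighbourhood of $x$ or of $y$ inside $G'$, exactly in the spirit of Cases~1--4 in the proof of Theorem~\ref{t:gen}; because $G$ is subcubic, each such repair touches only $O(1)$ vertices and cannot cascade. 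I expect the main difficulty to be precisely this boundary bookkeeping: since $x$ and $y$ have degree $3$ in $G$ and are adjacent in $G'$, the repairs at the two ends of the ear interact, and one must verify both that no repair creates a large blue component or a red $P_4$ and that it never recolours $v$. The latter is why $v$ is kept off the removed ear, and in the rare event that $v$ sits next to $x$ or $y$ one reroutes the induction through another leaf of $T$ --- always possible once $T$ has enough leaves, with the finitely many small remaining graphs checked directly.
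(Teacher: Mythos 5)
Your structural setup (weak dual tree, leaf face = ear whose interior vertices have degree $2$, base case $C_n$) is sound, but your induction runs in the opposite direction from the paper's, and that reversal is where the gap sits. The paper builds $G$ up through an ear decomposition and maintains a look-ahead invariant, Property $(\ast)$: two adjacent degree-$2$ vertices of the current subgraph are never both coloured blue if a later ear attaches at exactly that pair. This guarantees that when an ear is added its two endpoints are never both blue, and only then does the finite case analysis over $\ell$ and the endpoint states close. In your scheme the coloring of $G'$ comes from an induction hypothesis that gives you no control over the states of $x$ and $y$; since $x$ and $y$ are adjacent in $G'$ via the chord, a blue $K_2$ on $\{x,y\}$ is a perfectly legal outcome, as is either of them ending a red $P_3$. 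For short ears these configurations cannot be fixed by colouring $u_1,\dots,u_m$ alone: with $m=1$ and $x,y$ both blue, $u_1$ can be neither blue (it would give $x$ and $y$ two blue neighbours) nor red (isolated red vertex), so you must flip $x$ or $y$.

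Your proposed remedy --- ``repair a bounded neighbourhood of $x$ or $y$ in the spirit of Cases 1--4 of Theorem~\ref{t:gen}, and it cannot cascade'' --- is asserted, not proved, and it is precisely the hard part of the theorem. Those repairs work in Theorem~\ref{t:gen} because every edge carries subdivision vertices that provide slack; here there is none. In the $m=1$, $x,y$ blue example, flipping $x$ red and $u_1$ red fails whenever $x$'s third neighbour $w$ in $G'$ is the end of a red $P_3$ (you create a red $P_4$), and the symmetric obstruction can simultaneously occur at $y$; recolouring $w$ then threatens its own neighbourhood, so the repair can propagate, and nothing in the proposal bounds it --- nor protects the prescribed vertex $v$ from being swept up in such a repair (your ``reroute through another leaf, finitely many small graphs by hand'' is not substantiated). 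To make your route work you would need a genuinely stronger induction hypothesis controlling the admissible states of the endpoints of the surviving chord $xy$ --- which is, in effect, what the paper's Property $(\ast)$ achieves from the other direction.
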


%Therorem \ref{t:outer2} saying that $2$-connected subcubic outerplanar graphs admit crumby colorings even if we prescribe the color of a vertex.

\begin{proof}%[Proof of Theorem \ref{t:outer2}]
We consider $G$ together with its outerplanar embedding. An ear decomposition of a $2$-connected graph $G$ is a series of graphs $G_0, G_1, \dots, G_k=G$ such that $G_0$ is a cycle and $G_i=G_{i-1}\cup E_i$, where each $E_i$ is a path that have its two leaves in common with $G_{i-1}$. We may assume that $G_0$ is a bounded face containing the vertex $v$, and if $d(v)=3$ then let $v$ be an endpoint of $E_1$.
Since $G$ is a 2-connected outerplanar graph, it has an open ear decomposition such that on each ear the attachment vertices (endpoints) are adjacent.
The endpoints of the ears are different by the subcubic property.

\begin{figure}[!h]
    \centering
    \includegraphics[width=0.4\textwidth]{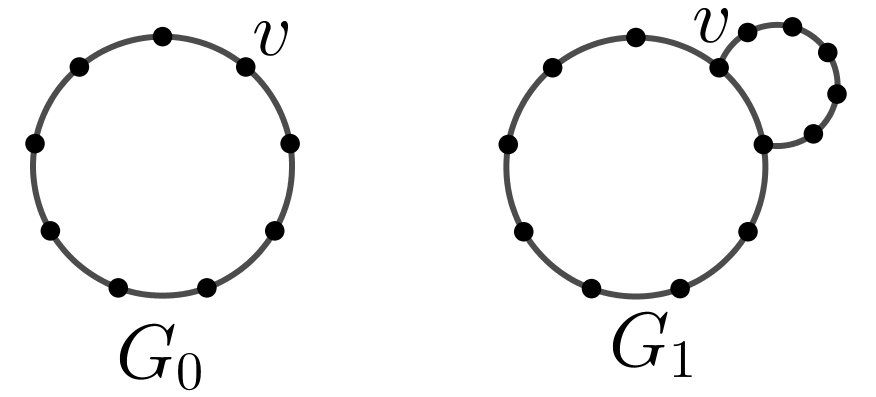}
    \caption{Starting points for the coloring process depending on the degree of $v$.}
    \label{fig:g0g1}
\end{figure}

In general, we start the coloring process with $G_0$.
There is an exceptional situation though.
If $d(v)=3$, then we immediately add the other bounded face containing $v$ as the first ear to form $G_1$ (see Figure \ref{fig:g0g1}).
We first show that the starting subgraph ($G_0$ or $G_1$ depending on the degree of $v$) of $G$ has a crumby coloring.
Secondly, we show that if $G_i$ has a crumby coloring, then $G_{i+1}$ also admits a crumby coloring in which the colors of the vertices of $G_i$ are unchanged except possibly the endpoints of the ear $E_{i+1}$.
This procedure leads to a crumby coloring of $G$.
During the coloring process, we establish and maintain a significant property of our crumby coloring.
Namely, we never color two adjacent vertices of degree 2 (with respect to the current subgraph) blue, unless we know that there is no later ear with this pair of endpoints. Let us call it \pa.

We use the shorthand $r$ for red and $b$ for blue.

\bigskip

{\bf Starting the procedure:}~If $d(v)=2$, then the only bounded face of $G$ containing vertex $v$ is a cycle of length $k\ge 3$. We know that $G_0=C_k$ has a crumby coloring, but we need more.
For $k=5$, we must observe that there exist adjacent vertices $x$ and $y$ in $C_5$, which are not the endpoints of any ear\footnote{Vertex $x$ might be the endpoint of ear $E_i$, but in that case $y$ is not the other endpoint.}.
Therefore, we color $x$ and $y$ blue and the remaining 3 vertices red in order to establish \pa. The required crumby colorings of $C_k$ was previously shown in Table \ref{tab:cycle}.

%innen kivettem a táblázatot, korábban megtalálható

If $k\ne 5$, then we can rotate the above given crumby colorings such that $v$ gets its prescribed color.
We notice the following for $k=5$.
If the prescribed color of $v$ is red, then we can choose two adjacent vertices $x$ and $y$ of the $5$-face containing $v$ (distinct from $v$), for which there is no (later) ear connecting them.
We color $x$ and $y$ blue and the rest red to establish \pa.
If $v$ is supposed to be blue, then \pa\ holds immediately as we rotate the given coloring of $C_5$ to make $v$ blue.

If $d(v)=3$, then we show a crumby coloring of $G_1$, the subgraph spanned by the two bounded faces containing $v$.
One endpoint of the first ear $E_1$ is $v$.
We denote the other endpoint by $u$.
Suppose that the boundary of $G_0$ is $(u, w_1, w_2, \dots, w_k, v)$, where $u$ and $v$ are adjacent, and the internal points of $E_1$ are $(z_1,z_2,\dots,z_{\ell})$ from $u$ to $v$.
Firstly, we give a crumby coloring for $k=\ell=2$.
There are two cases depending on the prescribed color of $v$, see Figure \ref{kl2}.

\begin{figure}[!ht]
    \centering
    \includegraphics[width=0.6\linewidth]{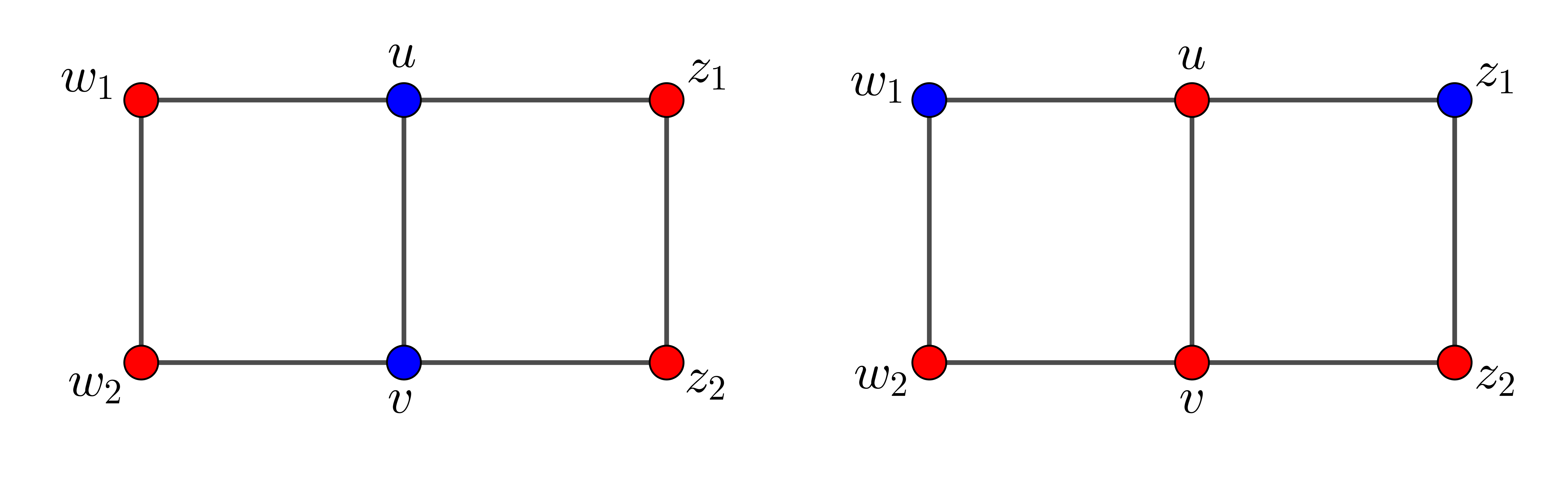}
    \caption{Crumby colorings for $k=\ell=2$.}
    \label{kl2}
\end{figure}

In the remaining cases, we color $u$ and $v$ differently and
assume the prescribed color of $v$ to be red.
If it was blue, then $u$ plays the role of $v$. In the following table, we summarize the initial colorings depending on the values of $k$ and $\ell$.%\footnote{Remember that we assume that $u$ is blue, and $v$ is red.}.

\begin{table}[!ht]
\begin{center}
\[
\begin{array}{|c|c|c|c|}
    \hline
    ~ & \makecell{\ell=3d+1 ~(d \in \mathbb{N}) \\ \mathrm{the~color~of~} \\ (z_1,z_2,\dots,z_{\ell})} & \makecell{\ell=3d+2 ~(d \in \mathbb{N}) \\ \mathrm{the~color~of~} \\ (z_1,z_2,\dots,z_{\ell})} & \makecell{\ell=3d+3 ~(d \in \mathbb{N}) \\ \mathrm{the~color~of~} \\ (z_1,z_2,\dots,z_{\ell})} \\
    \hline
    \makecell{k=3c+1 ~ (c\in \mathbb{N}) \\ \mathrm{let~the~color~of~} \\ (w_1,w_2,\dots,w_k) \\ \mathrm{be~} (rrb)^{c}r} & (rrb)^{d}r & \makecell{\mathrm{if~}d=0:~ br\\ \mathrm{if~}d\ge1:~ (rrb)^{d-1}rrrbr} & (rrb)^{d}rrb \\
    \hline
    \makecell{k=3c+2 ~ (c\in \mathbb{N}) \\ \mathrm{let~the~color~of~} \\ (w_1,w_2,\dots,w_k) \\ \mathrm{be~} \underbrace{(rrb)^{c-1}rrr}_{\mathrm{if~} c~\ge~1}br} & (rrb)^{d}r & \makecell{\mathrm{if~}d=0:~ br~(\mathrm{for~}\\ c=0,\mathrm{~see~Figure~}\ref{kl2}) \\ \mathrm{if~}d\ge1:~ (rrb)^{d-1}rrrbr} & (rrb)^{d}rrb \\
    \hline
    \makecell{k=3c+3 ~ (c\in \mathbb{N}) \\ \mathrm{let~the~color~of~} \\ (w_1,w_2,\dots,w_k) \\ \mathrm{be~} (rrb)^{c}rrb} & (rrb)^{d}r & \makecell{\mathrm{if~}d=0:~ br \\ \mathrm{if~}d\ge1:~ (rrb)^{d-1}rrrbr} &  \makecell{\mathrm{if~}d=0:~ brr \\ \mathrm{if~}d\ge1:~ (rrb)^{d-1}rrrbrr} \\
    \hline
\end{array}
\]
\end{center}
\label{tab:multicol}
\caption{The crumby colorings we use if $d(v)=3$, depending on the values of $k$ and $\ell$.}
\end{table}

%\newpage

It is immediate that these are crumby colorings, and have \pa, thus the coloring process can start.

\bigskip

{\bf Adding a new ear:} Let us assume that $G_i$ has already been colored and \pa\ holds.
We consider the next ear $E_{i+1}=(x,z_1,z_2,\dots,z_{\ell},y)$ of the ear decomposition.
\pa\  implies that the color of the endpoints of this ear cannot be both blue.
We assume $x$ is red. In the following case analysis we indicate the number of internal points of $E_{i+1}$, and the colors of its endpoints.

{\it Case $\ell=1$, $rb$:} If $x$ is not an endpoint of any red $P_3$, then we color $z_1$ red.
If $y$ is a singleton blue vertex, then we color $z_1$ blue. Otherwise, we interchange the color of $x$ and $y$, their previous components remained admissible, and color $z_1$ red.

{\it Case $\ell=1$, $rr$:} We color $z_1$ blue.

{\it Case $\ell=2$, $rb$:}
In the following table, we summarize the possibilities and give a suitable coloring for the endpoints and the internal points of the ear.

%If $x$ is an endpoint of a red $P_3$ and $y$ is a singleton blue vertex then change the color of $x$ to blue and color both $z_1$ and $z_2$ to red. If $x$ is an endpoint of a red $P_3$ and $y$ is in a $K_2$ blue component then again we can interchange the colors of $x$ and $y$ and both $z_1,z_2$ can be red. If $x$ is not an endpoint of any red $P_3$ and $y$ is a singleton blue vertex then $z_1$ can be red and $z_2$ can be blue. Lastly, if $x$ is not an endpoint of any red $P_3$ and $y$ is in a $K_2$ blue component then change the color of $y$ to red and $z_1,z_2$ can be red and blue, respectively.

% Akkor ha úgy látom rövidebbnek, akkor a táblázatos forma mellett maradok.

\[
\begin{array}{|c|c|c|c|c|}
    \hline
    \ell=2,~rb: & \makecell{x \mathrm{~is~an~endpoint} \\ \mathrm{of~a~red~} P_3 \\ \& \\ y \mathrm{~is~a~singleton} \\ \mathrm{~blue~vertex}} & \makecell{x \mathrm{~is~an~endpoint} \\ \mathrm{of~a~red~} P_3 \\ \& \\ y \mathrm{~is~in~a~blue} \\ K_2 \mathrm{~component}} & \makecell{x \mathrm{~is~not~an~endpoint} \\ \mathrm{of~a~red~} P_3 \\ \& \\ y \mathrm{~is~a~singleton} \\ \mathrm{~blue~vertex}} & \makecell{x \mathrm{~is~not~an~endpoint} \\ \mathrm{of~a~red~} P_3 \\ \& \\ y \mathrm{~is~in~a~blue} \\ K_2 \mathrm{~component}} \\
    \hline
    \makecell{\mathrm{color~of~} \\ (x,z_1,z_2,y)} & brrb & brrr & rrbb & rrbr \\
    \hline
\end{array}
\]

{\it Case $\ell=2$, $rr$:} If both $x$ and $y$ are red, then at most one of them can be an endpoint of a red $P_3$.
We may assume that there is no red $P_3$ in $G_i$ ending in $x$. We color $z_1$ red and $z_2$ blue, respectively.

{\it Case $\ell=3$, $rb$:} We color $z_1,z_2,z_3$ to $brr$.

{\it Case $\ell=3$, $rr$:} We may assume $x$ is not an endpoint of any red $P_3$.
If there is no (later) ear with endpoints $z_2$ and $z_3$, then we color $z_1,z_2,z_3$ to $rbb$ maintaining \pa.
On the other hand, if there exists an ear with endpoints $z_2$ and $z_3$ together with $m$ internal points (denote them by
$w_1,\dots,w_m$), then we merge the two ears and add them to
$G_i$ in one step.
We give a crumby coloring of the resulting graph in the following table.
Independent of the value $m$, we color $z_1$ and $z_3$ blue in order to avoid conflicts with the rest of the coloring.
We color $z_2$ red as well as $w_1$.
The coloring of $(w_1,w_2,\dots,w_m)$ is only shown for $m\le 6$ in the following table.
For greater $m$, we use the crumby coloring for $m-3$ and add $brr$ at the end.

\[
\begin{array}{|c|c|c|c|c|c|c|}
    \hline
    \ell=3,~rr: & m=1 & m=2 & m=3 & m=4 & m=5 & m=6 \\
    \hline
    \makecell{\mathrm{color~of~} \\ (w_1,w_2,\dots,w_m)} & r & rr & rrb & rbrr & rrbrr & rrbrrr \\
    \hline
\end{array}
\]

{\it Case $\ell=4$, $rb$:} We color $z_1,z_2,z_3,z_4$ to $brrr$.

{\it Case $\ell=4$, $rr$:} We color $z_1,z_2,z_3,z_4$ to $brrb$.

{\it Case $\ell=5$, $rb$:} Depending on the type of the components of $x$ and $y$ we need to color the points of the next ear a bit differently. In the following table, we summarize the possibilities and give a suitable coloring for the endpoints and the internal points of the ear.

\[
\begin{array}{|c|c|c|c|}
    \hline
    \ell=5,~rb: & \makecell{x \mathrm{~is~an~endpoint} \\ \mathrm{of~a~red~} P_3 \\ \& \\ y \mathrm{~is~a~singleton} \\ \mathrm{~blue~vertex}} & \makecell{x \mathrm{~is~an~endpoint} \\ \mathrm{of~a~red~} P_3 \\ \& \\ y \mathrm{~is~in~a~blue} \\ K_2 \mathrm{~component}} & \makecell{x \mathrm{~is~not~an~endpoint} \\ \mathrm{of~a~red~} P_3} \\
    \hline
    \makecell{\mathrm{color~of~} \\ (x,z_1,z_2,z_3,z_4,z_5,y)} & brrbrrb & brrbrrr & rrbrrrb \\
    \hline
\end{array}
\]

{\it Case $\ell=5$, $rr$:} We color $z_1,z_2,z_3,z_4,z_5$ to $brrrb$.

{\it Case $\ell=6$, $rb$:} We color $z_1,z_2,z_3,z_4,z_5,z_6$ to $brrbrr$.

{\it Case $\ell=6$, $rr$:} We may assume $x$ is not an endpoint of any red $P_3$. We color\\ $z_1,z_2,z_3,z_4,z_5,z_6$ to $rbrrrb$.

For $\ell\ge 7$, we create a crumby coloring using the cases for smaller values of $\ell$.
We start by $brr$, and continue with the given coloring of the ear with $\ell-3$ internal points. By starting with $brr$, we trace back to a similar situation for $\ell-3$ internal points in which $z_3$ takes over the role of $x$.
We remark that $z_3$ is in a red $K_2$ component, thus cannot be an endpoint of a red $P_3$.
% KELLENÉNEK-E IDE ÁBRÁK?
\end{proof}

A general outerplanar graph is not necessarily 2-connected. It is glued together from 2-connected blocks in a tree-like manner.
Some of the edges can form a tree hanging from a vertex of a block, or connecting a number of $2$-connected outerplanar components. In our case, the maximum degree 3 condition gives some extra structural information.
We are convinced that the natural extension of Theorem~\ref{t:outer2} to all subcubic outerplanar graphs holds.

\begin{conjecture} \label{outerplanar}
Every outerplanar graph with maximum degree $3$ admits a crumby coloring.
\end{conjecture}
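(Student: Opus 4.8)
The plan is to reduce Conjecture~\ref{outerplanar} to Theorem~\ref{t:outer2} and to a strengthening of Theorem~\ref{t:treesplus}, via the block--cut tree of $G$. We may assume $G$ is connected, and (by Theorems~\ref{t:outer2} and~\ref{t:treesplus}) that $G$ is neither $2$-connected nor a tree, so $G$ has a cut vertex. The subcubic hypothesis makes the decomposition rigid: every block of $G$ is a bridge or a $2$-connected (hence subcubic outerplanar) graph; and a cut vertex lying in a $2$-connected block $B$ has $\deg_B=2$, so it has exactly one further incident edge, necessarily a bridge. Consequently no two $2$-connected blocks share a vertex, each maximal union of bridges (a tree part) meets a given block in at most one vertex, and $G$ is assembled from crumby-colorable pieces -- $2$-connected subcubic outerplanar blocks and subcubic trees -- glued along single cut vertices in a tree-like pattern.

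First I would root the block--cut tree and colour the pieces in breadth-first order. When a piece $P$ is reached, exactly one of its cut vertices towards the root, say $c$, is already coloured and is joined to the coloured part by a single bridge $cw$; I would colour $P$ using Theorem~\ref{t:outer2} (if $P$ is $2$-connected) or the strengthened tree lemma (if $P$ is a tree part), prescribing at $c$ the colour dictated by $w$. The colours and \emph{types} that the remaining cut vertices of $P$ then receive serve as prescriptions for the children of $P$. Here the type of a coloured vertex $u$ is one of the finitely many local states of a crumby coloring -- blue singleton, vertex of a blue $K_2$, vertex of a red $K_2$, middle of a red $P_3$, or endpoint of a red $P_3$ -- and it determines exactly which colours the lone outside edge at $u$ may carry without creating a blue $P_3$ or a red $P_4$.

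The technical heart is an upgrade of Theorem~\ref{t:outer2}: for a $2$-connected subcubic outerplanar $B$, a vertex $v$ and a prescribed colour of $v$, one wants a crumby coloring of $B$ realising a \emph{prescribed safe type} at $v$ -- in particular $v$ should never be the endpoint of a red $P_3$, and ideally one can force $v$ to be a blue singleton (if $v$ is blue) or a red $K_2$-vertex (if $v$ is red). I expect this to come out of the ear-decomposition induction already used for Theorem~\ref{t:outer2}: one starts the ear decomposition at a cycle or ear through $v$ and tracks the type of every ear endpoint throughout -- the existing proof already distinguishes ``$x$ is an endpoint of a red $P_3$'' from the other cases and already maintains \pa{}. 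The delicate small cases are the odd short cycles, above all $C_5$, whose only crumby colourings are the rotations of $rrrbb$, so that a $C_5$-block never offers a blue singleton or a red $K_2$-vertex; but the three available rotations still give a usable type at the attachment vertex, at the price of a forced colour on the neighbouring bridge vertex, which the tree lemma must be flexible enough to absorb.

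The main obstacle -- and the reason this remains a conjecture -- is globalising the type bookkeeping: a degree-$3$ cut vertex lies between two coloured pieces and must meet the crumby constraints of both at once (it cannot, for instance, be a blue $K_2$-vertex on both sides, nor be red on one side and a red $P_3$-endpoint on the other). One must show that along every bridge $cw$ of $G$ the finitely many type pairs available from the two sides always contain a compatible one, so that a single global selection exists -- a constraint-satisfaction problem on the block--cut tree that should be solvable by dynamic programming, provided each block's menu of achievable cut-vertex types is rich enough. Equivalently, it would essentially suffice to know that for every $2$-connected subcubic outerplanar $B$ and every prescribed colour of a vertex $v$ there is a crumby coloring of $B$ in which $v$ is not a red $P_3$-endpoint; the tables in the proof of Theorem~\ref{t:outer2} make this very plausible, and establishing it (together with the analogous flexibility for trees) would complete the block-by-block assembly and yield the conjecture.
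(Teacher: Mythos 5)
You have not proved this statement, and indeed neither does the paper: Conjecture~\ref{outerplanar} is posed as an open conjecture, and the paper only establishes special cases (Theorem~\ref{t:outer2} for the $2$-connected case, Theorems~\ref{t:treesplus} and~\ref{t:fa2foku} for trees, Proposition~\ref{p:korbarhollelog}, and the partial gluing observations in Remark~\ref{csakketfa} and the remark following it). Your block--cut-tree setup and the reduction of attachments via prescribed colours is exactly the strategy the authors themselves outline, so as a plan it is aligned with the paper; but the two ingredients you label the ``technical heart'' and the ``main obstacle'' are precisely the content that is missing, and you explicitly defer both. The upgraded version of Theorem~\ref{t:outer2} in which one prescribes not just the colour but a \emph{safe type} at $v$ (never a red $P_3$-endpoint, ideally a blue singleton or a red $K_2$-vertex) is not a routine rerun of the ear-decomposition induction: as you yourself note, $C_5$ admits only rotations of $rrrbb$, so a block can refuse every ``ideal'' type at the attachment vertex, and the forced colour then propagates along the bridge into the neighbouring piece; nothing in your sketch shows that such forcings cannot chain up and collide. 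Likewise the global compatibility step (``the finitely many type pairs available from the two sides always contain a compatible one'') is asserted as a constraint-satisfaction problem ``that should be solvable by dynamic programming'' without any argument that each piece's menu of types is rich enough.

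The paper gives concrete evidence that this is where the real difficulty lies: Remark~\ref{csakketfa} shows that only $K_2$ and $K_{1,3}$ attachment trees are problematic, Theorem~\ref{t:fa2foku} has the $P_3$ exception (the middle vertex of $P_3$ can never be blue), and the remark after Proposition~\ref{p:korbarhollelog} exhibits a specific unresolved configuration --- two $2$-connected outerplanar pieces joined by a tree part isomorphic to $P_3$ (Figure~\ref{f:fa_p3}) --- on which the authors' own machinery, which is essentially your machinery, breaks down. Your proposal does not address this configuration or any mechanism for resolving it, so the argument has a genuine gap at exactly the point where the conjecture is hard; what you have written is a reasonable research programme, not a proof.
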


Considering this problem, one gets the impression that particular small trees attached to the vertices of a 2-connected outerplanar graph make the difficulty. It turns out that most trees do not cause any problems at all.
To prove this statement, we need the following result.

\begin{theorem}\label{t:fa2foku}
Any subcubic tree $T$ admits a crumby coloring such that the color of an arbitrary vertex of degree $2$ is prescribed, unless $T=P_3$.
\end{theorem}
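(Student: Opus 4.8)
The plan is to bootstrap from Theorem~\ref{t:treesplus}, distinguishing cases according to the prescribed colour of $v$ and according to whether a neighbour of $v$ is a leaf. Write $a$ and $b$ for the two neighbours of $v$; since $T\neq P_3$, they are not both leaves, so we may assume $d(a)\ge 2$. Let $T_a$ and $T_b$ be the two subtrees of $T$ meeting only in $v$, with $a\in T_a$ and $b\in T_b$; then $v$ is a leaf of each of them, and the degree of $a$ inside $T_a$ equals $d(a)\ge 2$.

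If $v$ is to be red, I would colour $v,a,b$ red, colour every neighbour of $a$ and of $b$ other than $v$ blue, and then, using Theorem~\ref{t:treesplus}, extend into each pendant subtree hanging from one of these newly blue vertices, taking that vertex as the prescribed blue leaf (a one-vertex pendant subtree is just a crumby blue singleton). Since $a$ and $b$ acquire no red neighbour besides $v$, the red component of $v$ is precisely the path $a\,v\,b$, and every blue vertex keeps blue-degree at most $1$; this handles the red case for every $T$ (for $P_3$ it is the all-red colouring). The harder case is $v$ blue, for which the key ingredient is the auxiliary claim: \emph{if $S$ is a subcubic tree, $u$ a leaf of $S$, and the neighbour $w$ of $u$ has degree at least $2$ in $S$, then $S$ admits a crumby colouring with $u$ blue and $w$ red}. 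I would prove it by deleting $u$: if $d_S(w)=2$ then $w$ is a leaf of $S-u$ and Theorem~\ref{t:treesplus} colours $S-u$ crumbily with $w$ red; if $d_S(w)=3$ then $w$ is a degree-$2$ vertex of $S-u$ and the (already established, unconditional) red case above does the same, the single exception $S-u=P_3$ forcing $S=K_{1,3}$, which is then coloured by hand. Either way, reattaching $u$ as a blue singleton preserves crumbiness. Granting the claim, the blue case splits as follows: if one of $a,b$, say $b$, is a leaf, then $b$ must be blue in any crumby colouring and hence $a$ must be red, so I colour $b$ blue and invoke the claim on $T_a$ to make $v$ blue and $a$ red; if neither is a leaf, I invoke the claim on both $T_a$ and $T_b$, and the two colourings glue harmlessly since the blue vertex $v$ separates the two red components. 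The exclusion of $P_3$ is genuinely needed, because the all-red colouring is its only crumby colouring, so its degree-$2$ vertex cannot be prescribed blue.

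The step I expect to require the most care is the auxiliary claim: its degree-$3$ subcase loops back to the red direction of the theorem, so one must confirm that the argument is not circular (it is not, since the red case is proved outright) and must dispatch the small trees $P_2$, $P_3$ and $K_{1,3}$ separately; after that, the remaining case distinctions are routine.
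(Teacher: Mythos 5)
Your reduction of the blue case to the auxiliary claim, and of the claim back to the red case, is structurally sound (and one can check the gluings there), but the foundation---your ``unconditional'' red case---has a genuine gap. You colour $v,a,b$ red, colour every other neighbour of $a$ and $b$ blue, and then extend into the subtree hanging at each such blue vertex $x$ by invoking Theorem~\ref{t:treesplus} with $x$ as ``the prescribed blue leaf''. But if $d_T(x)=3$, then $x$ has degree $2$, not $1$, in the subtree hanging at it, so Theorem~\ref{t:treesplus} does not apply; what you would need is precisely the blue half of the theorem you are proving, on a smaller tree, which you have not licensed (you have set up no induction, and indeed you later rely on the red case being unconditional to rule out circularity in the auxiliary claim). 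Reading your step instead as applying Theorem~\ref{t:treesplus} separately to each branch below $x$ with $x$ a prescribed blue leaf does not help: the two branch colourings may both give $x$ a blue neighbour, producing blue degree $2$ at $x$.

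Worse, the extension you ask for can be outright impossible, so the scheme cannot be repaired by a mere recursion. Take $T$ to be the path $b,v,a,x$ with two further leaves $u_1,u_2$ attached to $x$ (so $d(v)=2$, $d(x)=3$). In any crumby colouring, $x$ cannot be blue: the leaves $u_1,u_2$ could then be neither red (each would be an isolated red vertex) nor both blue ($x$ would have two blue neighbours). Hence your plan of making every neighbour of $a$ other than $v$ blue cannot be completed, even though the conclusion of the theorem does hold for this tree with $v$ red (for instance $b,v$ red, $a$ blue, $x,u_1$ red, $u_2$ blue). So the assertion ``this handles the red case for every $T$'' is false as argued, and since the degree-$3$ subcase of your auxiliary claim leans on the red case, the blue case inherits the gap. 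A repair would have to treat red and blue prescriptions by a simultaneous induction (your dependency pattern is well-founded, since each reduction passes to a proper subtree) and handle the hanging $P_3$-centred-at-$x$ obstruction by a different local recolouring, e.g.\ making $a$ blue rather than red there; by contrast, the paper avoids this case analysis altogether by taking a minimal tree in which the degree-$2$ vertex is forced red and deriving a contradiction from Theorem~\ref{t:treesplus}.
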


\begin{proof}
If $T=P_3$, then the middle vertex cannot be blue in a crumby coloring. Therefore, this is an exception. From now on, we assume that $T$ has at least 4 vertices. Every tree admits a crumby coloring by Theorem~\ref{t:treesplus}. Let us suppose that $T$ is a minimal example of a tree, which has a vertex $v$ of degree 2 such that in any crumby coloring of $T$, the color of $v$ must be red. We think of $v$ as the root, and denote the two neighbors of $v$ by $x$ and $y$.

If any of the neighbors of $v$ is of degree 2, say $x$, then we can delete the edge $vx$ and consider the two remaining trees $T_v$ rooted at $v$ and $T_x$ rooted at $x$. We get a contradiction by using Theorem~\ref{t:treesplus} with prescribed color red on $x$ and blue on $v$ in the respective trees.

\begin{figure}[!ht]
    \centering
    \includegraphics[width=0.6\linewidth]{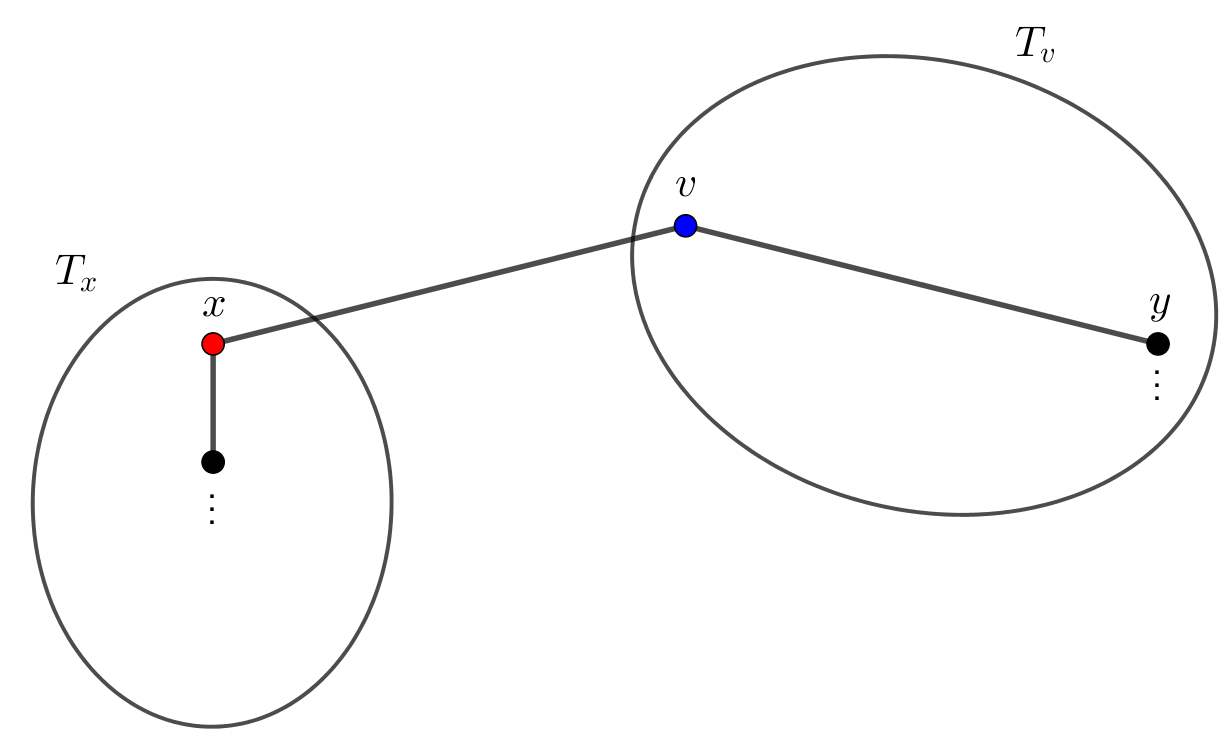}
    \caption{If $d_T(x)=2$, then we get a contradiction.}
    \label{f:fa2foku2}
\end{figure}

Since $T$ has at least 4 vertices, we may assume that $d_T(x)=3$. As before, we get a contradiction if the color of $x$ can be red in a crumby coloring of $T_x$, since we can color $v$ blue and use Theorem~\ref{t:treesplus} on $T_v$. Therefore, let us suppose that $T_x$ is a tree, for which the degree 2 vertex $x$ can only be colored blue in a crumby coloring. Denote the neighbors of $x$ in $T_x$ by $z$ and $w$.

Due to the same reasons as above, the degree of $z$ and $w$ cannot be 2 in $T_x$. It cannot be 1 either, since in that case $T_x$ has a crumby coloring in which the color of that leaf is prescribed red. Consequently $x$ is also red, which is a contradiction. Hence $d_{T_x}(z)=d_{T_x}(w)=3$, and by the minimality of $T$, we know that $T_z$ admits a crumby coloring such that the degree 2 vertex $z$ is blue. Now we may delete the edge $xz$ and precolor the degree 1 vertex $x$ red and find a crumby coloring of a subgraph of $T_x$. However, we can add back the edge $xz$ giving a crumby coloring of $T_x$ with red $x$, a contradiction. The same holds for $T_w$, but there is one exception: if both $T_z=T_w=P_3$. In Figure~\ref{f:fa2foku}, we give a crumby coloring of $T_x$ so that $x$ is red, which concludes the proof.

\begin{figure}[!ht]
    \centering
    \includegraphics[width=0.6\linewidth]{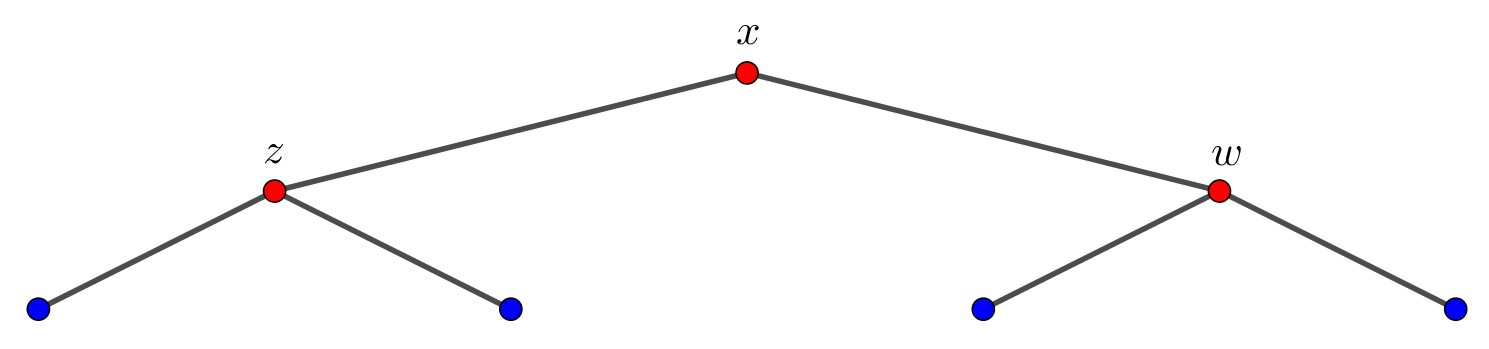}
    \caption{A crumby coloring of $T_x$ such that $x$ is red.}
    \label{f:fa2foku}
\end{figure}
\end{proof}

\begin{remark} \label{csakketfa}
If $G$ is a graph that admits a crumby coloring, and $T$ is an arbitrary tree with a leaf $v$, then let $G_T$ denote a graph which we get by identifying $v$ with any vertex of $G$.
Observe that if an attachment tree $T$ is not $K_2$ or $K_{1,3}$, then it is trivial to get a crumby coloring of $G_T$. The key idea is to assign different colors to $v$ and its only neighbor $x$ inside $T$. Consider a crumby coloring of $G$, therefore the color of $v$ is given, and color $x$ differently. By Theorem \ref{t:treesplus} and Theorem \ref{t:fa2foku} (depending on $d_T(x)$), we can extend this coloring to a crumby coloring of $T-v$ which results in a crumby coloring of $G_T$.

Therefore, it is indifferent with respect to crumby colorings to attach trees, which are not isomorphic to $K_2$ or $K_{1,3}$. In the sequel, we assume that every attachment tree is either $K_2$ or $K_{1,3}$.
\end{remark}

\bigskip

Now, we prove a basic instance of Conjecture~\ref{outerplanar} relying on Theorems~\ref{t:treesplus} and \ref{t:fa2foku}.

\begin{proposition}\label{p:korbarhollelog}
Let $C$ be a cycle with vertices $v_1,\dots,v_k$, plus we might attach arbitrary trees $\{T_i\}$ to vertices $\{v_i\}$ of $C$, where $i\in I$ and $I\subseteq [k]$. The resulting graph $G$ admits a crumby coloring.
\end{proposition}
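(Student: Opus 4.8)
The plan is as follows. By Remark~\ref{csakketfa} we may assume that every attachment tree $T_i$, $i \in I$, is $K_2$ or $K_{1,3}$; since $G$ is subcubic, each such $v_i$ then has degree $3$ in $G$ (two edges on $C$ and one edge into $T_i$), while every other vertex of $C$ has degree $2$. First I would fix a red/blue colouring of $V(C)$ and then extend it into the trees, using the slack inside the trees to absorb local defects of the cycle colouring.

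The first thing to record is how much a single attached tree can absorb; all of the following are checked directly. If $v_i$ carries a $K_2$ and is coloured red, finish at $v_i$ by colouring the pendant blue when $v_i$ has a red neighbour on $C$, and red (forming a red $K_2$) otherwise; if $v_i$ carries a $K_2$ and is coloured blue with no blue neighbour on $C$, colour the pendant blue. If $v_i$ carries a $K_{1,3}$ with centre $c_i$, then $c_i$ must be red (otherwise a leaf of $c_i$ is an isolated red vertex); if $v_i$ is blue with at most one blue neighbour on $C$ (automatic once $C$ itself is coloured crumbily), colour $c_i$ red, one leaf of $c_i$ red and the other blue; if $v_i$ is red, colour $c_i$ red and both its leaves blue, so that $c_i$ becomes a pendant edge on the red component of $v_i$, and this finishes unless it creates a path on $4$ vertices, which inside a cycle happens only when $v_i$ is an endpoint (not the middle) of a red $P_3$ on $C$, or $k = 3$ and $C$ is monochromatic red. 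Finally, there is one extra degree of freedom worth noting: a tree-vertex $v_i$ may be coloured red even while isolated in red on $C$, since the edge into $T_i$ supplies its missing red neighbour --- the pendant coloured red if $T_i = K_2$, and $c_i$ together with one of its leaves coloured red if $T_i = K_{1,3}$ (this produces a red $P_3$ inside $T_i$ with $v_i$ an endpoint, which is harmless).

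Now I would colour $C$. If $3 \mid k$, use the periodic pattern $(rrb)^{k/3}$: every red vertex lies in a red $K_2$ and every blue vertex is isolated in blue with both neighbours red, so by the facts above the colouring extends to a crumby colouring of $G$ no matter where the trees sit and of which type they are. If $3 \nmid k$ and $k \neq 5$, use instead a pattern assembled from $rrb$-blocks together with one or two $rrrb$-blocks (for instance $rrrb\,(rrb)^{(k-4)/3}$ when $k \equiv 1 \pmod 3$), or from $rrb$-blocks together with one or two $rrbb$-blocks; in patterns of the first kind all blue blocks are singletons, so the only obstruction coming from the facts above is a $K_{1,3}$-vertex at an endpoint of a red $P_3$-block, and in patterns of the second kind all red blocks are $K_2$'s, so the only obstruction is a $K_2$-vertex inside a blue $K_2$-block. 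One verifies that for $k \neq 5$ one of these two patterns, suitably rotated, avoids all obstructions --- essentially because the $K_{1,3}$-vertices and the $K_2$-vertices cannot simultaneously be dense enough to block both --- and then the coloured cycle extends to a crumby colouring of $G$.

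This leaves $C_5$: its only self-crumby colouring, $rrrbb$, contains both a red $P_3$-block and a blue $K_2$-block, and since these are complementary no rotation can dodge both obstructions once the five vertices carry enough trees. Here I would drop the requirement that the colouring restricted to $C$ be crumby and use the extra freedom recorded above --- a tree-vertex may be red while isolated in red on $C$, and a $K_{1,3}$-vertex may even be red with both its cycle-neighbours blue --- finishing by a short direct case analysis on which of $v_1,\dots,v_5$ carry which type of tree. I expect this $C_5$ endgame, together with the bookkeeping showing that for $k \neq 5$ a rotated $rrrb$- or $rrbb$-pattern always works, to be the main obstacle; the absorption facts of the second paragraph make everything else routine.
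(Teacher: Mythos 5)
Your absorption analysis for $K_{1,3}$-attachments has a flaw that propagates through the whole argument. Since the centre $c_i$ of an attached $K_{1,3}$ is forced to be red, \emph{two adjacent red $K_{1,3}$-vertices are always forbidden}: $c_i\,v_i\,v_{i+1}\,c_{i+1}$ is a red path on four vertices. Your claim that a red $K_{1,3}$-vertex only causes trouble when it is an endpoint of a red $P_3$ on $C$ ignores this interaction, and already the case $3\mid k$ fails as stated: for $C_6$ with a $K_{1,3}$ attached to every vertex, every rotation of $(rrb)^2$ puts two $K_{1,3}$-vertices into the same red $K_2$-block, so the pattern never extends (the graph is of course colourable, e.g.\ by colouring $C$ alternately red and blue and extending into the trees, which is essentially what the paper does for even $k$).

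Independently of that, the pivotal sentence ``one verifies that for $k\neq 5$ one of these two patterns, suitably rotated, avoids all obstructions'' is not verified and is false even with your own (incomplete) obstruction list. Take $k=8$, attach $K_{1,3}$ to $v_1$ and $v_2$ and $K_2$ to $v_3$ and $v_5$. The rotations of $rrrbrrrb$ place the $rrr$-endpoints on all odd or on all even positions, and each of these sets contains a $K_{1,3}$-vertex ($v_1$, resp.\ $v_2$); the rotations of $rrbbrrbb$ place the $bb$-blocks on $\{3,4,7,8\}$, $\{4,5,8,1\}$, $\{5,6,1,2\}$ or $\{6,7,2,3\}$, each of which contains a $K_2$-vertex ($v_3$ or $v_5$). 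So both of your pattern families are blocked in every rotation, while the graph does admit a crumby colouring (e.g.\ $C$ coloured $b\,b\,r\,r\,b\,r\,r\,r$ with the evident tree extensions — a ``mixed'' pattern outside both families). Your heuristic that $K_{1,3}$-vertices and $K_2$-vertices ``cannot simultaneously be dense enough'' therefore needs a real proof, with a larger pattern repertoire or a different mechanism; note also that a vertex carrying no tree is constrained as well (it cannot be an isolated red vertex), which your pattern bookkeeping does not track. Finally, the $C_5$ endgame is only announced, not carried out. By contrast, the paper avoids fixed periodic patterns altogether: it colours $C$ alternately (using a parity argument via the tree-neighbours' colours when $k$ is odd, and a small local repair when empty vertices exist) and lets Theorems~\ref{t:treesplus} and~\ref{t:fa2foku} colour the trees; as written, your argument has a genuine gap at its central claim.
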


\begin{proof}
We may assume that each attachment tree is isomorphic to $K_2$ or $K_{1,3}$ by Remark~\ref{csakketfa}.
%, we only have to deal with $K_2$-s and $K_{1,3}$-s as attachment trees, since otherwise we can simply delete them and reduce the problem.
Our arguments slightly vary depending on some properties of $G$, thus we explain them separately.

Notice that some vertices of $C$ have attachments and some do not.
In the latter case, the vertex is called empty. First, let us assume that there are no empty vertices at all. %In other words, either a $K_2$ or a $K_{1,3}$ is attached to each vertex $v_i$ of $C$.

We notice that the case where $k$ is even is simple.
We color the vertices of $C$ alternately red and blue.
This gives the prescribed color of a leaf $v_i$ in the tree $T_i$. We color $T_i$ using Theorem~\ref{t:treesplus} for each $i=1,\dots,k$.
These colorings together form a crumby coloring of $G$.

Assume now that $k$ is odd.
We try to reuse the previous strategy by cutting off two consecutive vertices $v_i$ and $v_{i+1}$ and the trees $T_i$ and $T_{i+1}$ from $G$.
We notice that the remaining graph $H$ admits a crumby coloring by the previous argument. In particular, the first and last vertices ($v_{i+2}$ and $v_{i-1}$) on $C-\{v_i,v_{i+1}\}$ receive the same color.
%We determine the smallest $j$ (if any) such that the neighbor $u_j$ of $v_j$ in $T_j$ is a leaf in $T_j-v_j$.
%We consider the crumby coloring of $T_{j+1}-v_{j+1}$ and record the color of $u_{j+1}$, say red.
%We color $v_j$ and $v_{j+1}$ blue and prescribe the color of $u_{j}$ as red and color $T_j-v_j$ by Theorem~\ref{t:treesplus}.
%Now we attach the crumby coloring of $H$ such that the first and last vertex of $C-\{v_j,v_{j+1}\}$ are colored red.
%If $u_{j+1}$ was blue, then we swap colors accordingly.

%Otherwise, $u_j$ has degree $2$ in $T_j-v_j$ for every $j=1,\dots,k$.
For every $j$ between 1 and $k$, the tree $T_j-v_j$ admits a crumby coloring.
%extending the coloring of $v_j$ by Theorem~\ref{t:treesplus}.
Let us record for every $j$ the color of $u_j$, the neighbor of $v_j$ in $T_j$.
Since $k$ is odd, there is an index $\ell$ such that $u_{\ell}$ and $u_{\ell+1}$ received the same color, say blue.
Now we color $v_{\ell}$ and $v_{\ell+1}$ red and cut the cycle $C$ by removing $\{v_{\ell},v_{\ell+1}\}$. We color $H$ as before such that we color the first and last vertex on $C-\{v_{\ell},v_{\ell+1}\}$ blue.
If $u_{\ell}$ was red, then we interchange colors accordingly.
Altogether, a crumby coloring of $G$ arises.

Unless there are no attachment trees at all (which case is easy), we can find two consecutive vertices of $C$, say $v_1$ and $v_2$ such that there is a tree attached to $v_1$, but $v_2$ has none. We use the following algorithm to color the vertices on $C$ starting by coloring $v_1$ red and $v_2$ blue. Our aim is to color the vertices along $C$ alternately, except in one case, when after a blue vertex we color an empty vertex red.
In that case, the next vertex must be also red.
Observe that if a red vertex is non-empty, then no matter if the tree is $K_2$ or $K_{1,3}$, we can color its vertices maintaining the crumby property. If $v_{i-1}$ is blue, and $v_i$ is an empty red, then $v_{i+1}$ must also be red.
However, it is attainable that $v_{i+1}$ is not an end of a red $P_3$. Only two problems can occur during this algorithm.
Both of them might happen, when we color $v_k$.

If $v_k$ was blue, then $v_1$ might remain a red singleton. However, this cannot be the case by the existence of $T_1$. Otherwise if $v_k$ is red, then we might create a large red component.
If $T_1=K_2$, then the leaf of $T_1$ can be blue.
Hence the red component cannot contain a red $P_4$, since $v_k$ was not an end of a red $P_3$. If $T_1=K_{1,3}$, then the center of $T_1$ must be red, which causes a problem if $v_{k-1}$ is an empty red or $T_k=K_{1,3}$. If we created a red $P_4$, then we recolor $v_1$ to blue and color the remaining vertices in $T_1$ red.
\end{proof}

\begin{remark}
Using the ideas of the previous proofs, we can prove Conjecture~$\ref{outerplanar}$ for a few other classes.
For instance, if $G$ is glued together from $2$-connected pieces in a tree-like fashion by single edges or paths of any length. Actually the paths might be replaced by any tree, as long as the first vertex outside of a $2$-connected piece has degree $2$. Even if the degree is $3$, our algorithm works except when the tree part between two $2$-connected components is precisely $P_3$, see Figure~$\ref{f:fa_p3}$.
%igazából lehet harmadfokú is, csak ne egy P_3. A P_3 amúgy miért baj? Nem lehet majd színt váltani, de nem érhetnénk el, hogy a 2-öf esetben a kezdőkörön a csatlakozási pont ne legyen piros P_3 vég? Ha ez megcsinálható, akkor szerintem minden olyan outerplanar OK, amiről a 2-öf részeken eltekintünk a l'art pour l'art lelógó fáktól!
In these good cases, we use Theorem~$\ref{t:treesplus}$ and Theorem~$\ref{t:fa2foku}$ as follows.
We first color a $2$-connected outerplanar subgraph $G_1$.
There is at least one vertex of attachment $x_1$, where a tree $T$ is glued on $G_1$.
Let $y_1$ be the neighbor of $x_1$ in $T$, which we know has degree $1$ or $2$ in $T-x_1$.
We prescribe the color of $y_1$ to be different from that of $x_1$.
We continue this way, until the entire graph is colored.
\end{remark}

\begin{figure}[!ht]
    \centering
    \includegraphics[width=0.6\linewidth]{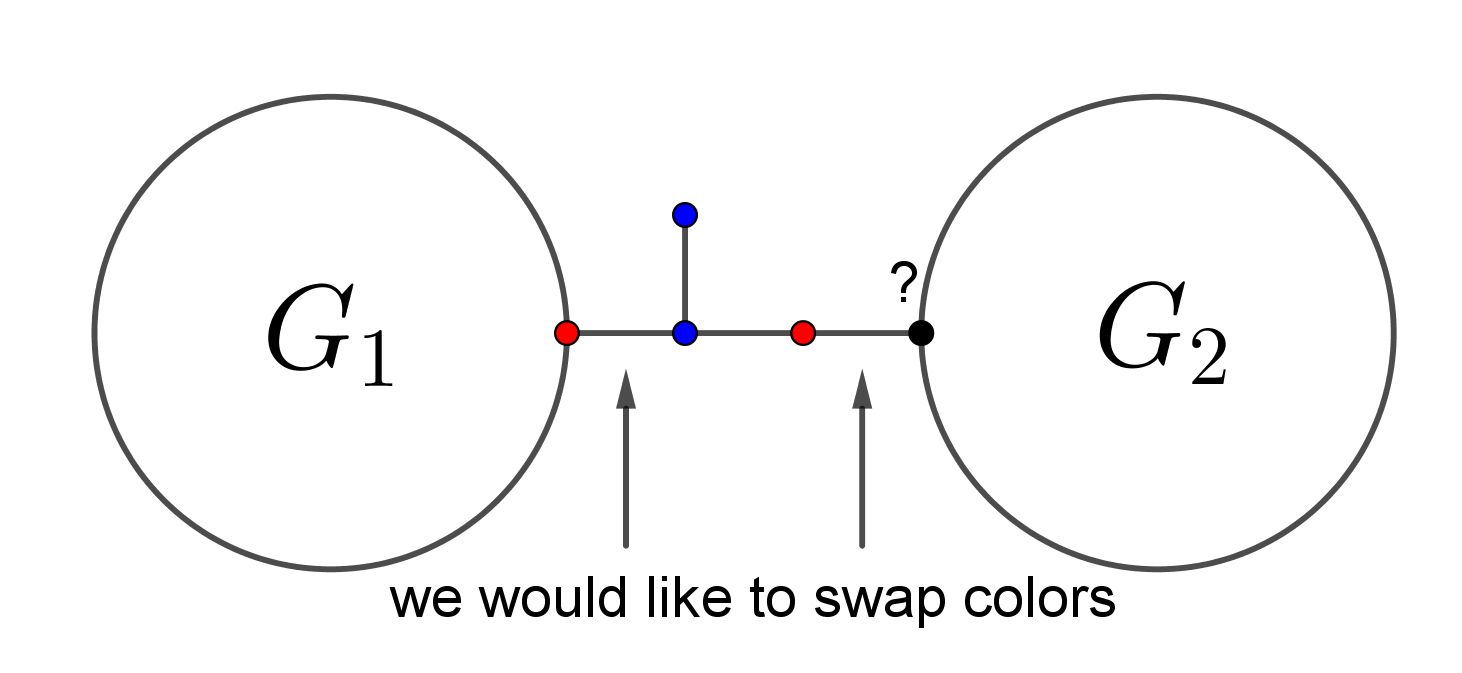}
    \caption{Problematic situation between two 2-connected outerplanar components $G_1$ and $G_2$.}
    \label{f:fa_p3}
\end{figure}

\section{Subdivisions of the complete graph on 4 vertices} \label{sec:k4}

% Odd subdivisions were essential in Gallai's conjecture on $\tau$-critical graphs with defect $\delta$...

Here we consider subdivisions of $K_4$, that has played interesting role in coloring problems \cite{j&t}.
As a strengthening of Hajós' conjecture, Toft \cite{bjarne} posed the problem if every 4-chromatic graph contains a totally odd subdivision of $K_4$.
Thomassen \cite{toks1} and independently Wang \cite{toks2} gave an affirmative answer for this.

Bellitto et al. \cite{counter} constructed planar graphs refuting Conjecture~\ref{ccc}.
Characteristically, those counterexamples have $K_4$-minors.
Therefore, we study whether this property has fundamental importance.
We conjecture that every $K_4$-minor-free subcubic graph possesses a crumby coloring.
On the other hand, we show the topological appearance of one copy of $K_4$ is not yet an obstacle.

As the core of the problem, we first consider $\le2$-subdivisions of $K_4$.
That is, every edge contains 0, 1 or 2 subdivision vertices.
It feels straightforward to give a computer-assisted proof, which we did.
We decided to include it as an appendix. However, we opted for a human proof argument.
%forbidden to add at least 3 new vertices on any edge of the $K_4$.

\begin{lemma}\label{l:subsubk}
Let $G$ be a subdivision of $K_4$ such that every edge is divided into at most $3$ parts. The graph $G$ admits a crumby coloring.
\end{lemma}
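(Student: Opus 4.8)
The plan is to dispose of the generic configuration with the machinery already built and then reduce what is left to a bounded, explicit case check. If every edge of $K_4$ carries at least one subdivision vertex, then $G$ is a genuine subdivision of the subcubic graph $K_4$, so Theorem~\ref{t:gen} (equivalently, since $K_4$ has a perfect matching, Lemma~\ref{l:tp}) already produces a crumby coloring of $G$. Hence from now on I may assume that the set $F\subseteq E(K_4)$ of edges that are \emph{not} subdivided is nonempty; our graph $G$ is then obtained from the subgraph $H_F:=(V(K_4),F)$ by joining the ends of each edge $e\in E(K_4)\setminus F$ by a fresh path with one or two internal vertices.

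Next I would organise the remaining work by the isomorphism type of $H_F$. Up to $\mathrm{Aut}(K_4)=S_4$ there are only ten possibilities: a single edge, $2K_2$, $P_3$, $P_4$, $K_3$, $K_{1,3}$, $C_4$, the paw, the diamond $K_4-e$, and $K_4$ itself. Within each type the subdivided edges receive one or two internal vertices, which together with the symmetry leaves finitely many concrete graphs; most of them are covered by a single coloring recipe, and the building blocks are the crumby colorings of the short paths $P_3$ and $P_4$ from Table~\ref{tab:utszin} (these are exactly the subdivided edges, of length $2$ and $3$) and the cycle colorings of Section~\ref{s:outer}.

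The coloring scheme mirrors the proof of Lemma~\ref{l:tp}. I colour the four original vertices of $K_4$ — the branch vertices, the only vertices of degree $3$ — red, so that a crumby violation (an isolated red vertex, a blue vertex with two blue neighbours, or a red $P_4$) can occur only at these four vertices. Fix a perfect matching $M$ of $K_4$, chosen to suit $F$; since $K_4-M$ is always a $4$-cycle, the edges of $G$ outside $M$ form one (possibly subdivided) cycle $C$ through all four branch vertices, with $M$ contributing two more (possibly subdivided) edges. I colour $C$ so that every branch vertex acquires a red neighbour while no branch vertex becomes the \emph{end} of a red $P_3$: on a subdivided edge of $C$ this is the table look-up of Lemma~\ref{l:tp}, whereas an unsubdivided edge of $C$ (one lying in $F$) merely makes its two red ends adjacent, which is harmless provided the red path it lies on is not extended at either end. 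Finally I colour the two $M$-edges: if such an edge is in $F$ its ends already form a red $K_2$, and if it is subdivided it is finished exactly as in the last step of Lemma~\ref{l:tp}, clearing the remaining isolated red branch vertices. The single degenerate configuration $F=E(K_4)$, i.e.\ $G=K_4$ itself, is handled by hand: making two adjacent vertices blue and the other two red is crumby.

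The main obstacle, and the genuinely new feature compared with Theorem~\ref{t:gen}, is precisely the presence of unsubdivided edges: they create adjacencies \emph{between} branch vertices, so one can no longer colour each edge-path of $G$ in isolation and patch afterwards — at each of the at most four branch vertices the colour of the vertex and of its at most three incident arms must be coordinated globally, and a short red or blue path can hop from one arm to the next through an unsubdivided edge. Because $K_4$ has only four vertices and six edges this coordination is a finite check, but it is the delicate step; organising it by the ten types of $H_F$, and within the densest types ($K_4$, the diamond, the paw, $C_4$) further by the lengths of the subdivided edges, keeps the verification readable and is how I would present the proof.
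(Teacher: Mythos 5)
Your first step is sound and is actually a nicer route than the paper takes for that case: when every edge of $K_4$ carries a subdivision vertex, $G$ is a genuine subdivision of the subcubic graph $K_4$, so Theorem~\ref{t:gen} (or Lemma~\ref{l:tp}, since $K_4$ has a perfect matching) applies and there is no circularity, whereas the paper handles this configuration by explicit colorings. The problem is everything after that. The cases with at least one intact edge are the bulk of the lemma --- the paper spends one figure and an argument on ``exactly one intact edge'', another on ``exactly two'', and three full tables plus a figure on ``three or more'' --- and for these you only announce a strategy and then explicitly defer the ``finite check'' to the reader. A proof that ends with ``organising it by the ten types of $H_F$ \dots is how I would present the proof'' has not presented it; the deferred verification \emph{is} the content of the lemma.

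Worse, the uniform recipe you do state (colour all four branch vertices red, fix a perfect matching $M$ of $K_4$, colour the $4$-cycle $K_4-M$ arm by arm as in Lemma~\ref{l:tp}, finish on the two $M$-edges) provably fails for several of your own types of $H_F$, so the check is not a routine confirmation of a stated colouring but requires inventing new colourings case by case. If the intact edges form the diamond $K_4-e$, making all four branch vertices red already yields a red $P_4$ inside the diamond. If they form a triangle $ABC$ and the three arms from the fourth vertex $D$ each carry exactly one subdivision vertex, then $D$ (red) can only get a red neighbour $x$ that is also adjacent to a triangle vertex, and $D$--$x$--$A$--$B$ is a red $P_4$; so some branch vertex must be blue, which your scheme does not allow and the paper's Figure~\ref{subk3} colourings reflect. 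You patch $G=K_4$ by hand, but these other dense configurations are not addressed at all, and the paper's tables for the ``three intact edges forming a path'' case show that the working colourings frequently use blue branch vertices (e.g.\ $R\,B\,r\,R\,r\,B$ for $(i,j,k)=(0,1,1)$). So the gap is concrete: after the genuine-subdivision reduction, you still need essentially the entire case analysis of the paper (or an equivalent computer check, as in the appendix), and your proposed shortcut through an all-red branch-vertex colouring cannot supply it.
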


\begin{proof}
Let $V(K_4)=\{A,B,C,D\}$.
Every edge of $K_4$ may remain intact or might be subdivided by either one or two new internal vertices.
Our arguments are organized by the number of intact edges.

If there are no intact edges (genuine subdivision), then color the vertices of $K_4$ red and every subdivision vertex blue.
Since the red vertices are isolated, we must recolor some internal vertices red.
If there are two independent edges of $K_4$ with one internal vertex each, then recolor these internal vertices red.
Otherwise, there exists a vertex of $K_4$, vertex $B$ say, with at least two incident original edges $BC$ and $BD$ with two internal vertices.
There are two possibilities regarding the number of internal points on $AD$ and $AC$ as one can see in Figure \ref{subk0}.
If there is only one internal vertex on one of these edges, then change its color to red, and change the color of two internal vertices on $BD$ and $BC$ as shown. In the other case, one can create a crumby coloring just like in the picture on the right. Note that every dashed edge have at least one internal vertex and these are blue. Hence we got a crumby coloring.

\vspace{-3pt}
\begin{figure}[!ht]
    \centering
    \includegraphics[width=0.7\linewidth]{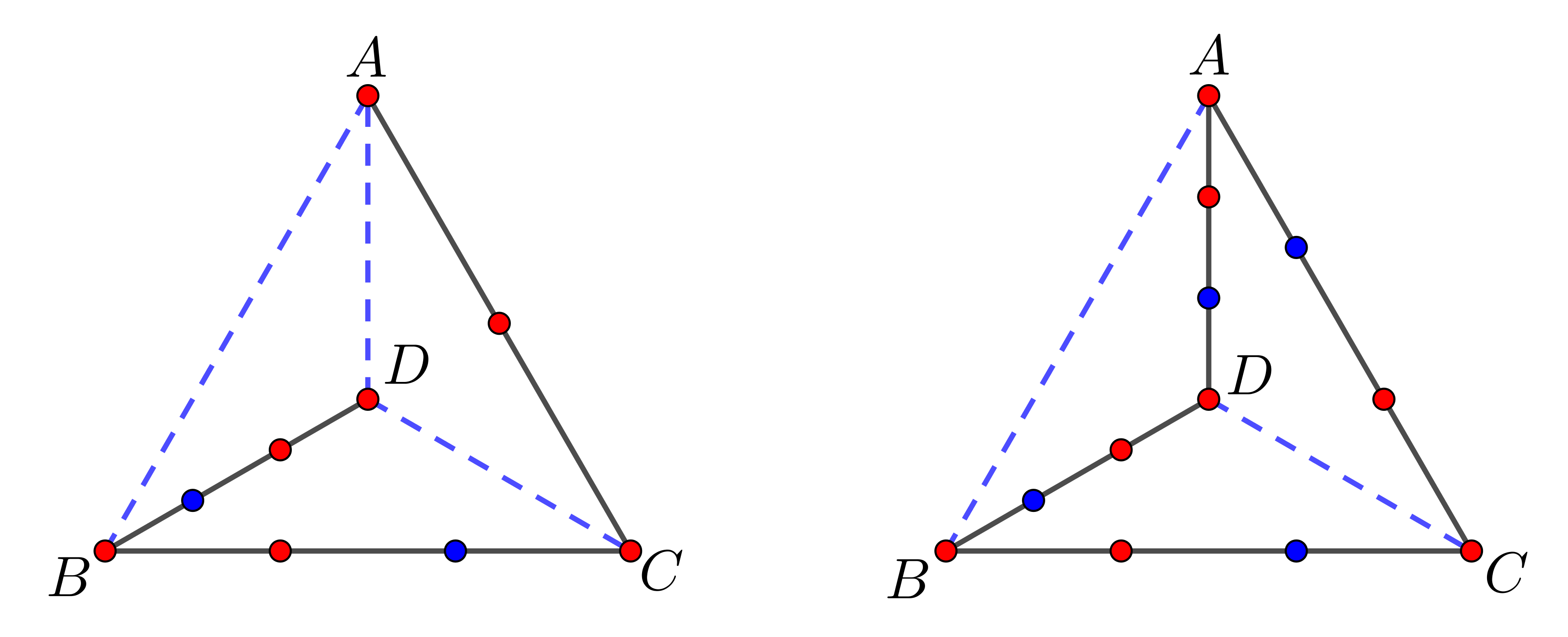}
    \caption{Crumby colorings of genuine subdivisions of $K_4$, where vertex $B$ is incident with 2 edges with 2 internal vertices.}
    \label{subk0}
\end{figure}
%\vspace{-3pt} %ezekkel a trükkökkel így most értelmesen jönnek a képek

Next assume there is precisely one intact edge, $AB$ say.
If $CD$ contains exactly one internal vertex $x$, then color $x$ and the vertices of $K_4$ red, and color the other internal vertices blue to get a crumby coloring.
Thus we may assume that $CD$ contains two internal vertices.
If any of the remaining 4 edges of $K_4$ contains two internal vertices, then there is a path on 7 vertices formed along these two particular edges of $K_4$.
We color the vertices of the path $rrbrrbr$ starting from $C$.
It can be extended to a crumby coloring by coloring the vertices of $K_4$ red and the internal vertices blue. In Figure \ref{subk1}, we illustrate such a coloring and also cover the only remaining case. Again all dashed edges contain blue internal vertices.

\begin{figure}[!ht]
    \centering
    \includegraphics[width=0.7\linewidth]{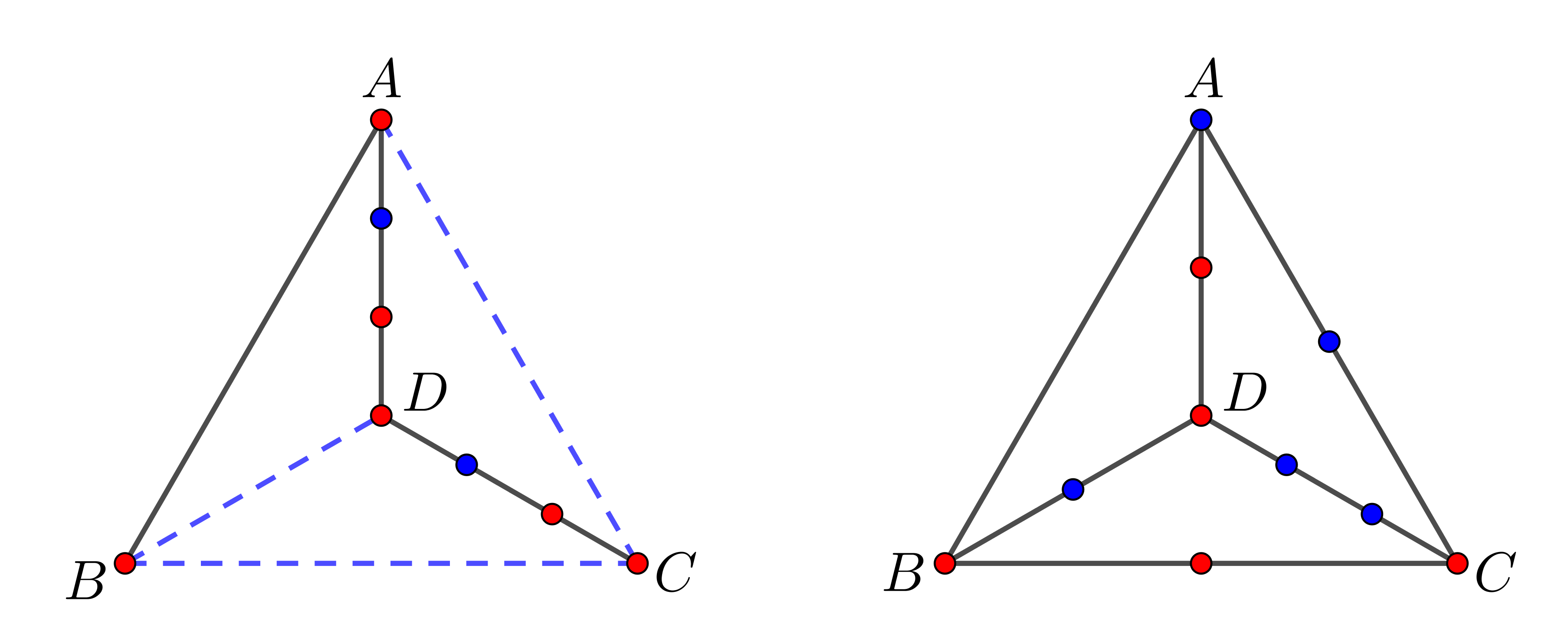}
    \caption{Extendable coloring of the path and a crumby coloring of the remaining case.}%egyáltalán nem tetszik, amit a captionbe írtam... nem világos mit kellene írni
    \label{subk1}
\end{figure}

Suppose there are exactly two intact edges of $K_4$.
If these two edges are independent, then again color the vertices of $K_4$ red and the internal points blue.
This results in a crumby coloring.
Therefore, we may assume that the intact edges are $AB$ and $BD$. If one of the edges incident to $C$ contains two internal vertices, then color the internal vertex adjacent to $C$ red along with $A,B,C,D$. Color the other vertices blue. In Figure \ref{subk2}, we give crumby colorings in the case, where the three edges incident to $C$ have 1 subdivision vertex. There are two such cases depending on the number of internal vertices on $AD$.

\begin{figure}[!ht]
    \centering
    \includegraphics[width=0.7\linewidth]{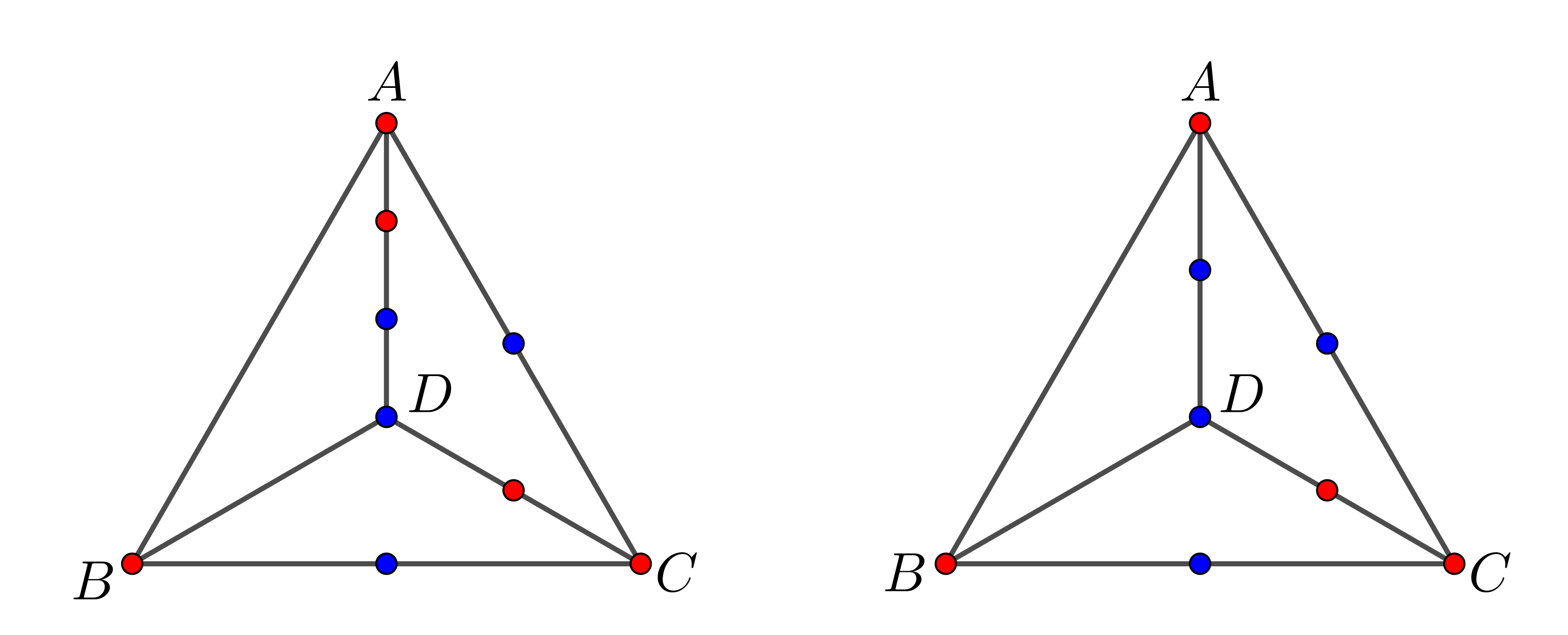}
    \caption{Two intact edges and all three edges incident to $C$ have exactly one subdivision vertex.}
    \label{subk2}
\end{figure}

Assume there are at least three intact edges.
There might be three of them, which form a path on the vertices of $K_4$ or there are exactly three intact edges either incident with the same vertex of $K_4$ or forming a triangle. In Figure~\ref{subk3}, we give crumby colorings for the latter two cases by coloring the internal vertices on the dotted edge red, and on the dashed edges blue.

\begin{figure}[!ht]
    \centering
    \includegraphics[width=0.7\linewidth]{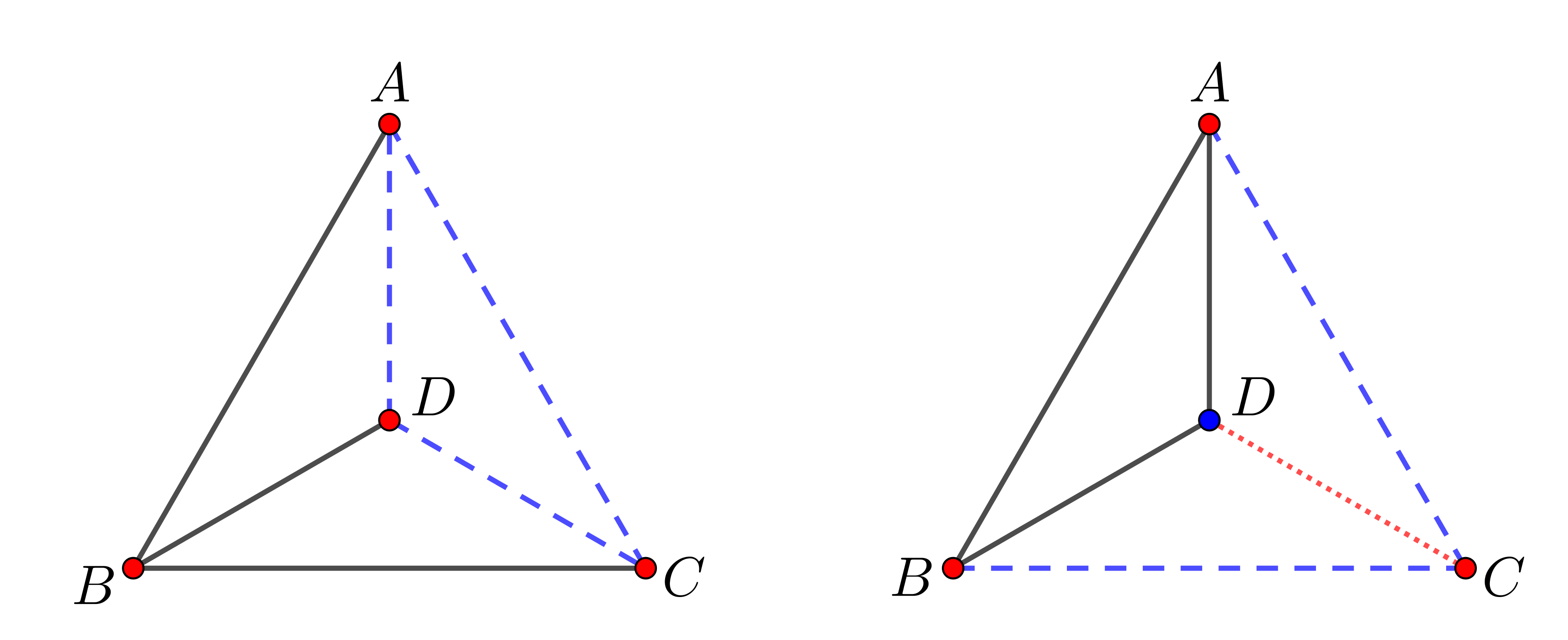}
    \caption{Precisely three intact edges incident to the same vertex or forming a triangle.}
    \label{subk3}
\end{figure}

Let us suppose that there is a path on the vertices of $K_4$, which consists of three intact edges.
We may assume that these are $AB$, $BC$ and $CD$.
Our idea is to color $A$ and $C$ red and $B$ blue (the color of $D$ might vary), and depending on the number of internal vertices on the remaining edges (which again form a path) color the vertices of this path suitably. Let $(i,j,k)$ denote the case, in which the number of internal vertices on $CA$, $AD$ and $DB$ are exactly $i$, $j$ and $k$, in that order.
There are three subcases depending on the value of $i$. In the following tables, we summarize the possibilities and give crumby colorings.

%i=2: & (2,0,0) & (2,0,1) & (2,0,2) & (2,1,0) & (2,1,1) & (2,1,2) & (2,2,0) & (2,2,1) & (2,2,2) \\

\begin{table}[h!] \label{tablei2}
            \footnotesize
                \begin{tabular}[h!]{|c|c|}
                    \hline
                    \makecell{$i=2$, \\ $(j,k)$:}&\makecell{\textrm{color~of~the~path} \\ $C-A-D-B$}\\ \hline
                    (0,0)& $R~bb~R~R~B$\\ \hline
                    (0,1)& $R~bb~R~R~b~B$\\ \hline
                    (0,2)& $R~bb~R~R~rb~B$ \\ \hline
                \end{tabular}
                \hfill
                \begin{tabular}[h!]{|c|c|}
                    \hline
                    \makecell{$i=2$, \\ $(j,k)$:}&\makecell{\textrm{color~of~the~path} \\ $C-A-D-B$}\\ \hline
                    (1,0)& $R~br~R~b~R~B$ \\ \hline
                    (1,1)& $R~br~R~b~R~b~B$ \\ \hline
                    (1,2)& $R~br~R~b~R~rb~B$ \\ \hline
                \end{tabular}
                \hfill
                \begin{tabular}[h!]{|c|c|}
                    \hline
                    \makecell{$i=2$, \\ $(j,k)$:}&\makecell{\textrm{color~of~the~path} \\ $C-A-D-B$}\\ \hline
                    (2,0)& $R~br~R~bb~R~B$ \\ \hline
                    (2,1)& $R~br~R~bb~R~b~B$ \\ \hline
                    (2,2)& $R~br~R~bb~R~rb~B$ \\ \hline
                \end{tabular}
                \caption{Crumby colorings if there are three undivided edges which form a path and $i=2$.}
\end{table}

\begin{table}[h!] \label{tablei1}
            \footnotesize
                \begin{tabular}[h!]{|c|c|}
                    \hline
                    \makecell{$i=1$, \\ $(j,k)$:}&\makecell{\textrm{color~of~the~path} \\ $C-A-D-B$}\\ \hline
                    (0,0)& $R~b~R~R~B$\\ \hline
                    (0,1)& $R~b~R~R~b~B$\\ \hline
                    (0,2)& $R~b~R~R~rb~B$ \\ \hline
                \end{tabular}
                \hfill
                \begin{tabular}[h!]{|c|c|}
                    \hline
                    \makecell{$i=1$, \\ $(j,k)$:}&\makecell{\textrm{color~of~the~path} \\ $C-A-D-B$}\\ \hline
                    (1,0)& $R~b~R~b~B~R$ \\ \hline
                    (1,1)& $R~b~R~b~B~r~R$ \\ \hline
                    (1,2)& $R~r~B~b~R~bb~R$ \\ \hline
                \end{tabular}
                \hfill
                \begin{tabular}[h!]{|c|c|}
                    \hline
                    \makecell{$i=1$, \\ $(j,k)$:}&\makecell{\textrm{color~of~the~path} \\ $C-A-D-B$}\\ \hline
                    (2,0)& $R~b~R~rb~R~B$ \\ \hline
                    (2,1)& $R~b~R~rb~R~b~B$ \\ \hline
                    (2,2)& $R~b~R~rb~R~rb~B$ \\ \hline
                \end{tabular}
                \caption{Crumby colorings if there are three undivided edges which form a path and $i=1$.}
\end{table}

\begin{table}[h!] \label{tablei0}
            \footnotesize
                \begin{tabular}[h!]{|c|c|}
                    \hline
                    \makecell{$i=0$, \\ $(j,k)$:}&\makecell{\textrm{color~of~the~path} \\ $C-A-D-B$}\\ \hline
                    (0,0)& $B~R~R~R$\\ \hline
                    (0,1)& $B~R~R~b~R$\\ \hline
                    (0,2)& $B~R~R~bb~R$ \\ \hline
                \end{tabular}
                \hfill
                \begin{tabular}[h!]{|c|c|}
                    \hline
                    \makecell{$i=0$, \\ $(j,k)$:}&\makecell{\textrm{color~of~the~path} \\ $C-A-D-B$}\\ \hline
                    (1,0)& $B~R~b~R~R$ \\ \hline
                    (1,1)& $R~B~r~R~r~B$ \\ \hline
                    (1,2)& $B~R~b~R~rb~R$ \\ \hline
                \end{tabular}
                \hfill
                \begin{tabular}[h!]{|c|c|}
                    \hline
                    \makecell{$i=0$, \\ $(j,k)$:}&\makecell{\textrm{color~of~the~path} \\ $C-A-D-B$}\\ \hline
                    (2,0)& $B~R~bb~R~R$ \\ \hline
                    (2,1)& $B~R~br~R~b~R$ \\ \hline
                    (2,2)& $B~R~br~R~rb~R$ \\ \hline
                \end{tabular}
                \caption{Crumby colorings if there are three intact edges which form a path and $i=0$.} %átírtam itt is az undivided szót intact-ra
\end{table}

This finishes all cases of the lemma.
\end{proof}

Using the solutions for the restricted cases in the previous lemma, we can obtain a crumby coloring for all subdivisions of $K_4$.

\begin{theorem}\label{t:subk}
Let $G$ be a subdivision of $K_4$. The graph $G$ admits a crumby coloring.
\end{theorem}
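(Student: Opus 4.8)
The plan is to reduce the general case to Lemma~\ref{l:subsubk} by using the following observation: in a subdivision $G$ of $K_4$, every edge $e$ of $K_4$ corresponds to a path $P_e$ in $G$ whose internal vertices all have degree $2$; the only degree-$3$ vertices of $G$ are the four branch vertices $A,B,C,D$. If some edge $e$ of $K_4$ carries at least three subdivision vertices, I would like to ``shrink'' that path: replace the internal path of $e$ by a shorter path with the same parity-type of behaviour, color the smaller graph by induction (or by Lemma~\ref{l:subsubk} once all edges are short), and then read the pattern back. The key is to identify, for a path $P_e$ of length at least $4$ (i.e.\ at least three subdivision vertices), a ``splice block'' one can insert or delete to change the number of internal vertices by $3$ while preserving a valid crumby coloring locally. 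Concretely, the block $\mathtt{brr}$ (or $\mathtt{rrb}$) does exactly this: if $e$ has internal coloring ending in $\dots r\,\mathtt{brr}\,r\dots$ one may delete the $\mathtt{brr}$ and still have a crumby coloring, since what was around the block was red on both sides.

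So the first step is: partition the edges of $K_4$ into ``short'' ($\le 2$ subdivision vertices, i.e.\ $\le 3$ parts) and ``long'' ($\ge 3$ subdivision vertices). If all edges are short, we are done by Lemma~\ref{l:subsubk}. Otherwise pick a long edge $e$ with $s\ge 3$ subdivision vertices, let $G'$ be the subdivision obtained from $G$ by deleting three consecutive degree-$2$ vertices from the middle of $P_e$ and reconnecting (so $e$ now has $s-3$ subdivision vertices). By induction on the total number of subdivision vertices, $G'$ admits a crumby coloring $c'$. The second step is to insert the block back: on the path $P_e$ in $G'$, pick an interior edge $z_i z_{i+1}$ of $P_e$, subdivide it three more times, and color the three new vertices $\mathtt{brr}$ (or the mirror image, depending on the colors $c'(z_i), c'(z_{i+1})$). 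One checks the handful of local cases --- the colors that can occur at $z_i$ and $z_{i+1}$ in a crumby coloring are limited, since these are degree-$2$ vertices --- to verify that inserting $\mathtt{brr}$, $\mathtt{rrb}$, $\mathtt{rbb}$, or $\mathtt{bbr}$ between them keeps all monochromatic components admissible. This mirrors exactly the ``for $\ell \ge 7$ we start with $\mathtt{brr}$ and continue'' trick already used repeatedly in the proof of Theorem~\ref{t:outer2} and in the Lemma preceding Theorem~\ref{t:gen}.

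One subtlety I would address carefully: the block to insert, and whether a short edge even admits such an interior edge, depends on the current coloring near the chosen spot. To make this clean I would phrase an auxiliary fact of the form ``if a $P_k$ ($k\ge 2$) has a crumby coloring in which its two endpoints are not endpoints of red $P_3$'s inside the path, then so does $P_{k+3}$, with the same behaviour at the endpoints,'' proved by a case split on the colors of the last two vertices. Starting from Lemma~\ref{l:subsubk}, every path $P_e$ corresponding to a short edge already has a coloring with the needed endpoint behaviour (the branch vertices are always red there, never the end of a red $P_3$ --- this is visible in Tables in the proof of Lemma~\ref{l:subsubk}), so the lengthening step applies and we can grow each short path to the required length independently. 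This gives an alternative, cleaner route that avoids induction on $G$ altogether: take the ``shadow'' of $G$ where every long edge is truncated to $\le 2$ subdivision vertices, apply Lemma~\ref{l:subsubk}, then lengthen each truncated path back to its true length using the $+3$ splice, choosing the inserted block according to the local colors.

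The main obstacle I expect is bookkeeping the local case analysis for the splice: one must be sure that inserting a block of three vertices never creates a blue $P_3$, a red $P_3$-endpoint where one is forbidden, or an isolated red vertex. The delicate point is when the insertion edge $z_i z_{i+1}$ has one red and one blue endpoint, or when a nearby vertex is already part of a blue $K_2$; there the choice between $\mathtt{brr}$ and $\mathtt{rrb}$ (or their mirrors) matters, and for a truncated path of length exactly $2$ or $3$ there may be no safe interior edge at all, forcing us to instead lengthen from one of the branch vertices $A,B,C,D$ inward (which are guaranteed red). Handling that boundary situation --- lengthening at the very first subdivision vertex adjacent to a red branch vertex --- is where I would spend the most care, but it is entirely analogous to Case~1 through Case~4 in the proof of Lemma~\ref{l:tp}.
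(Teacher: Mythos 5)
Your plan is, at its core, the same as the paper's: reduce the number of subdivision vertices on each edge of $K_4$ modulo $3$, color the reduced graph by Lemma~\ref{l:subsubk}, and then splice blocks of three vertices back into each edge. The gap lies in the splicing step, exactly where you defer the case analysis. You justify the problematic boundary case (a short or intact edge with no safe interior insertion spot) by the claim that in the colorings provided by Lemma~\ref{l:subsubk} the branch vertices are ``guaranteed red'' and never ends of red $P_3$'s. That is false: in the case of three intact edges forming a path the lemma colors $B$ blue, and several table entries make two branch vertices blue \emph{and adjacent} (e.g.\ $i=0$, $(j,k)=(1,1)$ gives $R\,B\,r\,R\,r\,B$ along $C$--$A$--$D$--$B$, so $A$ and $B$ form a blue $K_2$ on the intact edge $AB$); likewise red branch vertices can be endpoints of red $P_3$'s (e.g.\ $i=1$, $(j,k)=(0,1)$ yields the red path $C$--$D$--$A$). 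Consequently your fallback ``lengthen inward from a red branch vertex'' is unavailable precisely in the hardest situation: an edge of $K_4$ whose entire path in the reduced graph is monochromatic, so that there is no red--blue interface at which to insert $brr$, and possibly no red endpoint at all. The paper handles this by a separate device you have no analogue of: first insert $rbr$ between two adjacent red vertices (respectively $rrr$ between the two blues of a blue $K_2$) to create a bichromatic pair, and only then iterate the $brr$ insertion.

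A second, related weakness is that your auxiliary lemma is stated for a standalone $P_k$, but the path sits inside $G$ and its endpoints are branch vertices with two further neighbors; after an insertion the red components at the two ends can merge \emph{through the rest of the graph} (for instance, when the reduced graph is $K_4$ itself, the lemma's coloring makes $A,B,D$ a red triangle, so what happens on one edge interacts with the intact edges $AB$ and $BD$). So the check ``the endpoints are not ends of red $P_3$'s inside the path'' is not the right invariant, and your list of candidate blocks even contains patterns that are never safe ($bbr$ at a red--blue interface isolates a red vertex; $rbb$ there creates a blue $P_3$). The skeleton of your argument matches the paper's proof, but the monochromatic-edge case and the correct choice of insertion blocks are the actual content of that proof, and as written your proposal both omits the former and rests the latter on an incorrect description of the colorings coming out of Lemma~\ref{l:subsubk}.
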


\begin{proof}
%To finish the proof, we trace back the general case into a subdivision considered in Lemma \ref{l:subsubk}.
Let $G'$ be the following reduction of $G$.
Independently, on each edge of $K_4$,
we replace the $k$ subdivision vertices by $k$ $\mathrm{mod}~3$ vertices.
By Lemma~\ref{l:subsubk}, there is a crumby coloring of $G'$.
%one can extend this to a crumby coloring of $G$ in the following way.
%
%It is straightforward, that $G$ can be created from $G'$ by adding a certain number of new vertices (which is divisible by 3) along each edge of the $K_4$. %hogy kellene ezt angolosabban mondani?
We extend the coloring of $G'$ independently on each edge of $K_4$. If along an edge of $K_4$ both colors appear, then we can find two consecutive vertices $x$ and $y$ with different colors.
We insert the necessary number of blocks of $brr$ between $x$ and $y$ such that it remains a crumby coloring.

Otherwise, the considered edge of $K_4$ is monochromatic.
If every vertex is red, then we insert $rbr$ between any two of them.  Now we continue the extension (if necessary) just like in the previous case, since there exist consecutive vertices with different colors.
A monochromatic blue edge means two blue vertices in $K_4$.
In that case, we insert $rrr$ between them.
Again, we continue the extension (if necessary) just like in the first case, since there exist consecutive vertices with different colors.
\end{proof}

In a slightly opposite direction, motivated by our proof idea of Theorem~\ref{t:outer2}, we pose the following

\begin{conjecture}
 Every $K_4$-minor-free graph admits a crumby coloring.
\end{conjecture}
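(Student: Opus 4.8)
The natural strategy is to push the method of Theorem~\ref{t:outer2} as far as it will go, in the subcubic setting relevant to this paper. Since a graph is $K_4$-minor-free precisely when each of its blocks is series-parallel, the first move is to reduce to the $2$-connected case: one colours the block-cut tree from its leaves inward, and at every cut vertex prescribes a colour consistent with the part coloured so far, extending over bridges and small attached trees by Theorem~\ref{t:treesplus}, Theorem~\ref{t:fa2foku} and Remark~\ref{csakketfa}, exactly as in Proposition~\ref{p:korbarhollelog} and the remark following it. Subcubicity keeps this step tame: a cut vertex has degree $3$, hence lies in at most three blocks, and the configurations that can meet there are limited. Thus everything reduces to the following claim: \emph{every $2$-connected subcubic series-parallel graph admits a crumby coloring, and one may prescribe the colour of a chosen vertex while maintaining a suitable analogue of \pa.}

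To prove the claim I would use the recursive structure of $2$-connected series-parallel graphs: such a graph is built from an edge by \emph{series} compositions (in particular subdividing) and \emph{parallel} compositions (inserting a further internally disjoint path between two terminals), which yields a decomposition tree, equivalently a nested ear decomposition extending the ear decompositions of Theorem~\ref{t:outer2}. On this tree I would run a bottom-up dynamic programme. To a $2$-terminal piece $H$ with terminals $s,t$ attach a finite \emph{interface type} recording, for each of $s$ and $t$, its colour and which "danger state" it is in (blue singleton; end of a red $P_3$; end of a red $K_2$; already satisfied), plus one bit saying whether the interior of $H$ is already crumby and obeys \pa. One then shows by induction on the tree that every piece realises a prescribed "sufficient" family of interface types, and that the series- and parallel-composition rules carry sufficient families to sufficient families. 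The base cases are a single edge and short ear-paths, and the composition rules are checked by the explicit colour-by-colour bookkeeping already used in Table~\ref{tab:utszin} and throughout Theorem~\ref{t:outer2}; the gadget colourings in Lemma~\ref{l:tp} and Lemma~\ref{l:subsubk} supply most of what is needed.

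The main obstacle --- and the reason this is still a conjecture --- is the parallel side of the recursion. When two degree-$3$ branch vertices $u,v$ are joined by several internally disjoint paths (at most three, by subcubicity: a $\theta$-graph), the colour and danger state of $u$ and of $v$ are shared by all those paths, so the interface types of the parallel pieces cannot be chosen independently. One must show that the small realisable families always admit a common profile at $\{u,v\}$, and in particular that short parallel paths with one or two internal vertices --- the series-parallel analogues of the troublesome $C_5$ and $P_3$ in the outerplanar proof --- never force an unavoidable clash. A secondary, essentially routine, difficulty is propagating the red-$P_3$ and blue-$K_2$ states along long series chains so that no red $P_4$ or blue $P_3$ is ever created. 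If the $\theta$-graph case analysis closes, reassembling the blocks and trees along the block-cut tree as above finishes the proof of the conjecture for subcubic graphs.
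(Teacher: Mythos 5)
This statement is not proved in the paper at all: it is posed as an open conjecture, so there is no proof to compare yours against. More to the point, your text is not a proof either, and you say so yourself: the decisive step --- the parallel composition, where two branch vertices $u,v$ of degree $3$ are joined by up to three internally disjoint paths and the colour/danger states at $u$ and $v$ must be compatible simultaneously for all parallel pieces --- is exactly where the argument has to do real work, and you only flag it as ``the main obstacle''. You never specify the finite ``sufficient family'' of interface types, nor verify that it is closed under series and parallel composition; without that explicit list and a checked composition table there is no induction, only a plan for one. The analogue of \pa\ is also subtler here than you suggest: in Theorem~\ref{t:outer2} the property is maintained using knowledge of which adjacent degree-$2$ pairs can later become endpoints of an ear, and in a nested series-parallel decomposition later pieces can attach inside already-coloured pieces in more ways, so the invariant itself has to be redesigned, not merely imported.

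The reduction to the $2$-connected case is also not ``routine'' in the way you claim. The paper's own discussion shows that gluing $2$-connected pieces and trees along cut vertices already fails to be straightforward in the strictly smaller outerplanar class: the situation where two blocks are joined by a $P_3$ (Figure~\ref{f:fa_p3}) is precisely the unresolved obstruction, which is why Conjecture~\ref{outerplanar} --- a special case of the statement you are attacking --- is itself still open. So your ``first move'' presupposes a resolution of difficulties the authors could not overcome even in the easier setting, and Theorems~\ref{t:treesplus} and \ref{t:fa2foku} together with Remark~\ref{csakketfa} do not cover the $K_2$, $K_{1,3}$ and $P_3$ attachment cases at cut vertices in general. (A smaller issue: the conjecture as stated does not say ``subcubic'', while your whole plan is subcubic; the intended degree restriction must at least be made explicit.) In short, this is a sensible research programme along the lines the authors themselves hint at, but it contains genuine gaps at both the parallel-composition step and the block-gluing step, and the statement remains open.
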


\section*{Acknowledgements}
The first author would like to thank Bjarne Toft for 25 years of friendship and encouragement. He influenced our work in Section~\ref{sec:k4}. The first author was partially supported by ERC Advanced Grant "GeoScape" and NKFIH Grant K. 131529.
The second author was supported in part by the Hungarian National Research, Development
and Innovation Office, OTKA grant no. SNN 132625 and K 124950.

\newpage
\appendix
\section{Appendix}

\begin{figure}[!h]
    \centering
    \includegraphics[width=0.7\linewidth]{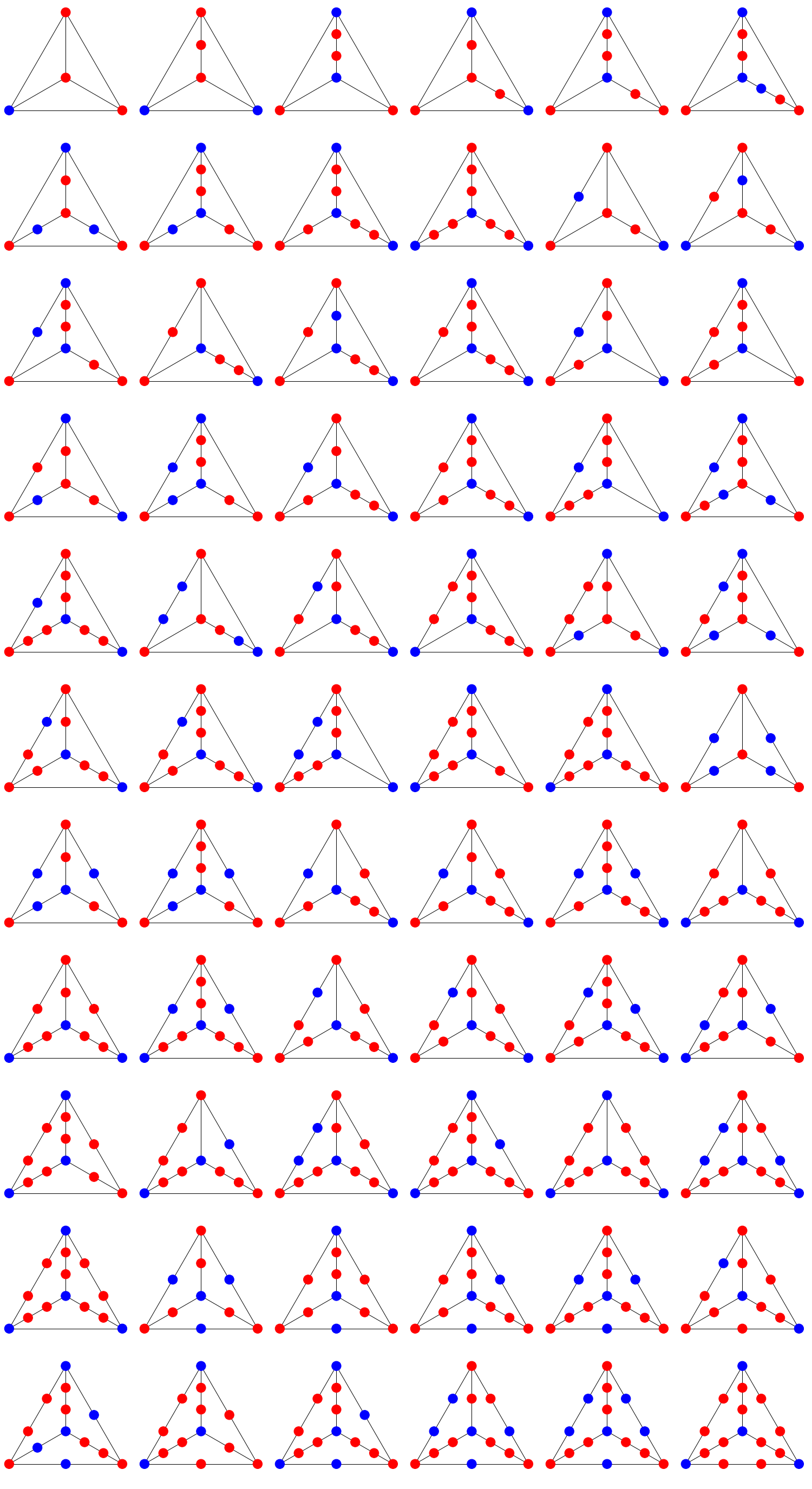}
    \caption{Computer-assisted proof of Lemma \ref{l:subsubk}.}
    \label{comp_k4}
\end{figure}


\begin{thebibliography}{00}
\bibitem{Barat}J. Barát.
Decomposition of cubic graphs related to Wegner’s conjecture.
{\it Disc. Math} {\bf 342}(5) 1520--1527, (2019).

\bibitem{counter}T. Bellitto, T. Klimo\v sová, M. Merker, M. Witkowski, Y. Yuditsky.
Counterexamples to Thomassen's conjecture on decomposition of cubic graphs.
{\it Graphs and Combinatorics} {\bf 37} 2595--2599,
(2021).
%\url{https://arxiv.org/abs/1908.06697}

\bibitem{b&m} J.A. Bondy, U.S.R. Murty.
Graph Theory. {\em Springer-Verlag}, London, XII+663 pages, (2008).

%\bibitem{edm2}J.R. Edmonds.
%Maximum  matching  and  a  polyhedron  with  0,1-vertices,
%{\it J. Res.Nat. Bur. Standards Sect. B} 125--130, (1968).

\bibitem{edm} J. Edmonds.
Paths, trees, and flowers,
{\it Can. J. Math.} {\bf 17} 449--467, (1965). %\url{doi:10.4153/CJM-1965-045-4}

\bibitem{gal}T. Gallai.
Maximale  Systeme  unabh\"anginger  Kanten,
{\it Magyar Tud. Akad. Mat. Kutató Int. K\" ozl.} {\bf 9} 401--413,
(1964).

\bibitem{j&t}T.R. Jensen and B. Toft. Graph Coloring Problems.
Wiley (1994).

\bibitem{toks1} C. Thomassen.
Totally Odd $K_4$-subdivisions in $4$-chromatic Graphs.
{\em Combinatorica} {\bf 21}, 417--443 (2001). %\url{https://doi.org/10.1007/s004930100006}

\bibitem{ct17} C. Thomassen. The square of a planar cubic graph is 7-colorable.
{\it JCTB} {\bf 128} 192--218, (2017).

\bibitem{bjarne} B. Toft: Problem 11, in: Recent Advances in Graph Theory, Academia Praha 543--544, (1975).

\bibitem{weg} G. Wegner.  Graphs with given diameter and a coloring problem. {\em Technical  Report}, University of Dortmund, (1977).

\bibitem{toks2} W. Zang.
Proof of Toft’s Conjecture: Every graph containing no fully odd $K_4$ is 3-colorable.
{\em J. Combin. Optimization} {\bf 2} 117--188, (1998).

\end{thebibliography}
\end{document}